\newtheorem{theorem}{Theorem}[section]
\newtheorem{proposition}[theorem]{Proposition}
\newtheorem{lemma}[theorem]{Lemma}
\newtheorem{remark}{Remark}
\newenvironment{vf}{\left\{\begin{array}{rcl}}{\end{array}\right.}
\newcommand{\includegraph}[2][]{\ifnum\pdfoutput=0\includegraphics[#1]{#2.eps}\else\includegraphics[#1]{#2.pdf}\fi}
\newcommand{\N}{\mathbb{N}}
\newcommand{\R}{\mathbb{R}}
\author[1]{Renato Huzak}
\author[2]{Ansfried Janssens\footnote{Corresponding author, {\tt ansfried.janssens@uhasselt.be}}}
\author[3]{Otavio Henrique Perez}
\author[4]{Goran Radunovi\'{c}}
\affil[1,2,3]{Hasselt University, Campus Diepenbeek, Agoralaan Gebouw D, 3590 Diepenbeek, Belgium}
\affil[3]{Universidade de S\~{a}o Paulo (USP), Instituto de Ci\^{e}ncias Matem\'aticas e de Computa\c{c}\~{a}o (ICMC). Avenida Trabalhador S\~{a}o Carlense, 400, CEP 13566-590, S\~{a}o Carlos, S\~{a}o Paulo, Brazil.}
\affil[4]{University of Zagreb, Faculty of Science, Horvatovac 102a, 10000 Zagreb, Croatia}
\affil[1]{{\tt renato.huzak@uhasselt.be}}
\affil[2]{{\tt ansfried.janssens@uhasselt.be}}
\affil[3]{{\tt otavio.perez@icmc.usp.br}}
\affil[4]{{\tt goran.radunovic@math.hr}}
\title{Fractal analysis of canard cycles and slow-fast Hopf points in piecewise smooth Li\'{e}nard equations}
\date{}
\begin{document}
\maketitle

\begin{abstract}
The main goal of this paper is to give a complete fractal analysis of piecewise smooth (PWS) slow-fast Li\'{e}nard equations. For the analysis, we use the notion of Minkowski dimension of one-dimensional orbits generated by slow relation functions. More precisely, we find all possible values for the Minkowski dimension near PWS slow-fast Hopf points and near bounded balanced crossing canard cycles. We study fractal properties of the unbounded canard cycles using PWS classical Li\'{e}nard equations. We also show how the trivial Minkowski dimension implies the non-existence of limit cycles of crossing type close to Hopf points. This is not true for crossing limit cycles produced by bounded balanced canard cycles, i.e. we find a system undergoing a saddle-node bifurcation of crossing limit cycles and a system without limit cycles (in both cases, the Minkowski dimension is trivial). We also connect the Minkowski dimension with upper bounds for the number of limit cycles produced by bounded canard cycles.
\end{abstract}
\textit{Keywords:} canard cycles; Minkowski dimension; piecewise smooth slow-fast Hopf point; piecewise smooth slow-fast Li\'{e}nard equations; slow relation function\newline
\textit{2020 Mathematics Subject Classification:} 34E15, 34E17, 34C40, 28A80, 28A75

\tableofcontents

\section{Introduction}\label{Section-Introduction}

The main purpose of this paper is to give a fractal classification of piecewise smooth (PWS) continuous slow-fast Li\'enard equations
\begin{align}\label{PWSLienard-Intro}
  \begin{cases}
  \dot x=y-F(x) ,\\ 
  \dot y=\epsilon G(x), \end{cases}
  \end{align}
  with
  \begin{align}\label{PWSLienard-Intro-FG}
  F(x)=\begin{cases}
  F_-(x), \ x\le 0,\\ 
  F_+(x), \ x\ge 0, \end{cases}
  \quad
  G(x)=\begin{cases}
  G_-(x), \ x\le 0,\\ 
  G_+(x), \ x\ge 0, \end{cases}
  \end{align}
where $\epsilon\ge 0$ is a singular perturbation parameter kept small, $F_\pm$ and $G_\pm$ are $C^\infty$-smooth functions, $F_\pm(0)=F_\pm'(0)=0$ and $G_\pm(0)=0$. The set $\Sigma = \{x=0\}$ is called the switching
line or switching manifold. For the classification, we will use the notion of Minkowski dimension (always equal to the box dimension \cite{Falconer,tricot}) of one-dimensional monotone orbits generated by so-called slow relation (or entry-exit) function (see Section \ref{sec-Motivation}). We refer to \cite{Benoit,Dbalanced,BoxNovo} and references therein for the definition of the notion of slow relation function in smooth planar slow-fast systems.
\smallskip

One of the important properties of such one-dimensional monotone orbits is their density. The density is usually measured by calculating the length of
$\delta$-neighborhood of orbits as $\delta\to 0$ and comparing the length with $\delta^{1-s}$, $0\le s\le 1$. In this way we obtain the Minkowski dimension of orbits, taking values between $0$ and $1$ (for more details, see Section \ref{Minkowski-def-separated}). The bigger the Minkowski dimension of the orbits, the higher the density of orbits. Following \cite{EZZ,ZupZub,MRZ}, the Minkowski dimension of orbits generated by the Poincar\'{e} map near foci, limit cycles, homoclinic loops, etc., is closely related to the number of limit cycles produced in bifurcations (roughly speaking, the bigger the Minkowski dimension,
the more limit cycles can be born). 

Similarly, in smooth planar slow-fast systems, the Minkowski dimension of orbits generated by a slow relation function plays an important role in detecting the codimension of singular Hopf bifurcations in a coordinate-free way \cite{BoxNovo}, finding the maximum number of limit cycles produced by
canard cycles \cite{BoxRenato,BoxDomagoj,BoxVlatko}, etc. For a more detailed motivation we refer the interested reader to \cite[Section 1]{BoxNovo} and \cite[Section 1]{MinkLienard}. Since these papers deal only with smooth slow-fast systems, it is natural to ask whether we can use similar methods to study fractal properties of \textit{PWS slow-fast systems}.

\begin{figure}[htb]
	\begin{center}
		\includegraphics[width=2.3cm,height=3.5cm]{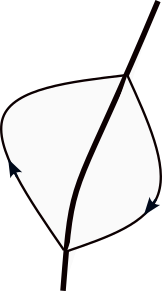}
		{\footnotesize
        \put(-13,98){$\Sigma$}
        \put(-80,78){$X_-$}
        \put(33,78){$X_+$}
             }
         \end{center}
 \caption{A crossing periodic orbit where $\Sigma$ is the switching manifold.}
	\label{fig-crossing-example}
\end{figure}

Piecewise smooth systems \cite{filippov1988differential} are an active field of recent research. The determination of crossing limit cycles, for example, is an important problem in PWS theory in the plane (see \cite{carmona2023a,llibre2013a,LlibreOrd,HuanYang2,freire2013a} and references therein). Such cycles intersect the switching manifold $\Sigma$ at points where the vector fields $X_-$ and $X_+$ point in the same direction relative to $\Sigma$ (see Figure \ref{fig-crossing-example}).

In this paper we are interested in the following limit periodic sets of the PWS slow-fast system \eqref{PWSLienard-Intro} (we also refer to Figure \ref{fig-Motivation} in Section \ref{sec-Motivation}). Assume that the curve of singularities of \eqref{PWSLienard-Intro} contains a normally attracting branch $\{y=F_+(x),x>0\}$ and a normally repelling branch $\{y=F_-(x),x<0\}$. Then the balanced canard cycle $\Gamma_{\hat y}$, with $\hat y>0$ when $\epsilon = 0$, consists of a portion of the normally attracting branch, a portion of the normally repelling branch and a horizontal fast orbit at level $y=\hat y$. These canard cycles may produce crossing limit cycles after a perturbation of \eqref{PWSLienard-Intro} if the slow dynamics of \eqref{PWSLienard-Intro} defined along the curve of singularities points from the attracting branch to the repelling branch and it has a regular extension through the origin $(x,y)=(0,0)$ (see Section \ref{section-applications}). This can be done by merging two smooth slow-fast Hopf points \cite{DRbirth} into a so-called PWS slow-fast Hopf point located at the origin $(x,y)=(0,0)$ (see  assumption \eqref{assum1} in Section \ref{sec-Motivation}). The PWS slow-fast Hopf point is the limit of $\Gamma_{\hat y}$ when $\hat y\to 0$. If $\hat y\to \infty$, we can have an unbounded canard cycle, which will be denoted by $\Gamma_\infty$, consisting of the curve of singularities and a part at infinity (see Section \ref{sectionunbounded}).

The main goal is to give a \textit{complete} fractal classification (i.e., we find all possible Minkowski dimensions) of the PWS slow-fast Hopf point and $\Gamma_{\hat y}$ described in the previous paragraph, related to the PWS continuous Li\'enard family \eqref{PWSLienard-Intro}. The fractal classification of the PWS slow-fast Hopf point is given in Theorems \ref{thm1} and \ref{thm2} in Section \ref{sectionHopf}, whereas the possible Minkowski dimensions of $\Gamma_{\hat y}$ are given in Theorem \ref{thm3} in Section \ref{sectionbounded}. We assume that $\Gamma_{\hat y}$ is balanced, that is, the slow divergence integral computed along the portion of the curve of singularities contained in $\Gamma_{\hat y}$ is zero (see Section \ref{sectionbounded} and \cite{Dbalanced}). In Theorem \ref{thm4} in Section \ref{sectionunbounded} we compute Minkowski dimensions near $\Gamma_\infty$ when \eqref{PWSLienard-Intro} is a PWS classical Li\'enard system (that is, $F_\pm$ are polynomials of degree $n+1$, $n\ge 1$, and $G_\pm$ are linear). Theorems \ref{thm1} to \ref{thm4} are proven in Section \ref{section-proof-all}.

In Section \ref{section-applications}, the link between the Minkowski dimensions computed in Section \ref{section-proof-all} and the number of crossing limit cycles of a perturbation of \eqref{PWSLienard-Intro} near $\Gamma_{0}=\{(0,0)\}$, $\Gamma_{\hat{y}}$ and $\Gamma_{\infty}$ is addressed (see system \eqref{eq-pws-lienard-general} in Section \ref{section-applications}). We focus our study to the case in which the Minkowski dimensions are trivial (that is, equal zero). Geometrically speaking, close to a PWS Hopf point, trivial Minkowski dimension of orbits tending to $\Gamma_{0}$ means that the connection between center manifolds are broken in the blow-up locus, so we cannot expect crossing limit cycles (see Section \ref{sec-blow-up} and Figure \ref{fig-pws-lienard}). However, trivial Minkowski dimension of orbits tending to $\Gamma_{\hat{y}}$ does not imply the absence of limit cycles. Indeed, we present an example in which the system undergoes a saddle-node bifurcation of limit cycles (see Section \ref{section-bounded-LC}). Finally, in Section \ref{sec-LC-unbound} we give examples in which the Minkowski dimension of orbits tending to $\Gamma_{\infty}$ is trivial, but, nevertheless, one can expect crossing limit cycles. The number of limit cycles related to higher Minkowski dimensions is a topic for future study (see Remark \ref{remark-nonzeroMD} in Section \ref{section-applications} for some results in that direction).

In the smooth setting, that is, when the functions $F$ and $G$ in \eqref{PWSLienard-Intro-FG} are $C^\infty$-smooth, we deal with a smooth slow-fast Hopf point at the origin $(x,y)=(0,0)$ and the following discrete set of values of the Minkowski dimension can be produced (see \cite{BoxNovo}): $\frac{1}{3},\frac{3}{5},\frac{5}{7},\dots,1$. From these values, which can be computed numerically \cite{BoxNovo}, we can read upper bounds for the number of limit cycles produced by the smooth slow-fast Hopf point (for more details see \cite[Theorem 3.4]{BoxNovo}). Theorems \ref{thm1} and \ref{thm2} imply that the PWS slow-fast Hopf point produces infinitely many new values: $0,\frac{1}{2},\frac{2}{3},\frac{3}{4},\dots$. We strongly believe that they give information about the number of limit cycles produced by the PWS slow-fast Hopf point. This is a topic of further study.

Similarly, besides old values of the Minkowski dimension when $F$ is a polynomial of even degree $n+1$ and $G$ is linear ($\frac{1}{2},\frac{3}{4},\dots,\frac{n-2}{n-1}$, see \cite[Remark 1]{MinkLienard}), in the piecewise smooth setting, $\Gamma_\infty$ produces the following new values: $0,\frac{4}{5},\frac{6}{7},\dots,\frac{n-1}{n}$. We refer to Theorem \ref{thm4} for more details. 

The main reason why we assume that $F_\pm$ and $G_\pm$ are $C^\infty$-smooth is because we want to detect all possible Minkowski dimensions of orbits. We need higher-order Taylor expansions (i.e., higher degrees of smoothness of $F_\pm$ and $G_\pm$) in order to find larger Minkowski dimensions of orbits (see Sections \ref{proofThm1} and \ref{proofThm3}).

Let us highlight the differences between our approach and those already presented in the literature concerning PWS slow-fast systems. In \cite{2016Roberts, 2024CX} the authors studied the existence of crossing canard limit cycles in piecewise smooth Li\'enard equations, in such a way that the origin is a corner point of the critical manifold (or curve of singularities) positioned in $\Sigma$, in the sense that $F'(0)$ in \eqref{PWSLienard-Intro} is not well defined. Moreover, the critical manifold of the models studied in such references presents a ``van der Pol - like'' shape. On the other hand, in \cite{2013DFHPT} the authors also address the existence of canard limit cycles, but considering a three-zoned piecewise smooth Li\'enard equation instead. In \cite{2023CFGT, 2016FGDKT} the authors studied the existence of canard cycles in four-zoned and three-zoned piecewise linear (PWL) systems, respectively.

In all those references, \cite{2023CFGT, 2024CX, 2013DFHPT, 2016FGDKT, 2016Roberts}, the authors fixed a linear function $G$ in \eqref{PWSLienard-Intro} (that is, $G_{-} = G_{+}$) and defined $F$ in a piecewise smooth way. Moreover, in their models, the critical manifold loses smoothness in the intersection with the switching manifold. On the other hand, in this paper, we allow both $F$ and $G$ to be defined in a piecewise smooth way. Moreover, in the study of the Hopf point and the bounded canard cycle, we do not require $G$ to be linear. In addition, the intersection between the critical manifold and $\Sigma$ is not a corner point. Finally, our main tools are Fractal Geometry, Slow Divergence Integrals and Slow-Relation Functions, which were not used in those previous references.

A connection between sliding canard cycles in regularized PWS systems and the slow divergence integral can be found in \cite{RHKK2023}.

\section{Minkowski dimension}\label{Minkowski-def-separated}

Let $U\subset\mathbb R^N$ be a bounded set. One defines its $\delta$-neighborhood  (or $\delta$-parallel body) as
$
U_\delta:=\{x\in\mathbb R^N \ | \ \text{dist}(x,U)\le\delta\}
$, where $\text{dist}(x,U)$ denotes the euclidean distance from $x$ to the set $U$. Denote the Lebesgue measure of $U_\delta$ by $|U_\delta|$. For $s\ge0$, we introduce the lower $s$-dimensional Minkowski content of $U$
$$
\mathcal M_*^s(U)=\liminf_{\delta\to 0}\frac{|U_\delta|}{\delta^{N-s}},
$$
and similarly, the upper $s$-dimensional Minkowski content $\mathcal M^{*s}(U)$ (replacing $\liminf_{\delta\to0}$ with $\limsup_{\delta\to0}$ above). We then define the lower and upper Minkowski (or box-counting, since they always coincide) dimensions of $U$ as:
$$
\underline\dim_BU=\inf\{s\ge0 \ | \ \mathcal M_*^s(U)=0\}, \ \overline\dim_BU=\inf\{s\ge0 \ | \ \mathcal M^{*s}(U)=0\}.
$$

When the upper and lower dimensions coincide, we refer to their common value as the Minkowski dimension of $U$, denoted by $\dim_BU$. For a comprehensive treatment of Minkowski dimension, we direct the reader to \cite{Falconer,tricot} and the references therein. Furthermore, if there exists a $d$ such that $0<\mathcal M_*^d(U)\le\mathcal M^{*d}(U)<\infty$, we say that $U$ is Minkowski nondegenerate in which case, $d=\dim_B U$ necessarily.

Consider a bi-Lipschitz mapping $\Phi:U \subset \mathbb{R}^{N}\rightarrow \mathbb{R}^{N_1}$, i.e., there exists a constant $\rho>0$ such that
$$
\rho\left\|x-y\right\|\leq\left\|\Phi(x)-\Phi(y)\right\| \leq\frac{1}{\rho} \left\|x-y\right\|
$$
for all $x,y\in U$. Then it is well-known that
$$
\underline\dim_{B}U=\underline\dim_{B}\Phi(U), \ \overline\dim_{B}U=\overline\dim_{B}\Phi(U).
$$

Moreover, if $U$ is Minkowski nondegenerate, then $\Phi(U)$ is also Minkowski nondegenerate (refer to \cite[Theorem 4.1]{ZuZuR^3}).

\smallskip

We also introduce here some notation used throughout this paper. For two sequences of positive real numbers $(a_l)_{l\in\mathbb{N}}$ and $(b_l)_{l\in\mathbb{N}}$ converging to zero, we write $a_l\simeq b_l$ as $l\to\infty$
if there exists a small positive constant $\rho$ such that $\frac{a_l}{b_l}\in[\rho,\frac{1}{\rho}]$ for all $l\in\mathbb{N}$.

Note that the Minkowski dimension has proven to be a useful tool in fractal analysis of various dynamical systems which enables one to extract information about the cyclicity of the system directly from analyzing the Minkowski dimension of one of its orbits \cite{zbMATH02196683,GoranInf} or even by looking just at the Minkowski dimension of a discrete orbit generated by the suitable Poincar\'e map \cite{EZZ,ZupZub} or even Dulac map \cite{zbMATH07307367}.
Furthermore, since the Minkowski dimension is always equal to the box dimension \cite{Falconer} which can be effectively computed numerically \cite{WU2020100106,FREIBERG2021105615,MEISEL19971565,10.1007/978-3-030-64616-5_8,RuizDeMiras2020,Panigrahy2019DifferentialBC,10.1063/5.0160394,BoxNovo}, it is natural to expect that numerical methods for determining cyclicity via the Minkowski dimension can be developed which puts further value to the results in our paper. 

It was also shown that the Minkowski dimension is useful in providing a novel tool for formal and analytic classification of parabolic diffeomorphism in the complex plane.
Even for the formal classification of parabolic germs one first needs to extend the definition of the Minkowski dimension either as in \cite{zbMATH06224533} or alternatively also look at higher order terms in the asymptotic series of the $\delta$-neighborhood of the orbit as $\delta$ tends to zero \cite{zbMATH07584629}. The latter approach is closely connected to the theory of complex (fractal dimensions) and associated fractal zeta functions introduced by Lapidus and van Frankenhuijsen \cite{LF12} for subsets of $\R$ and then extended to the general case of subsets of $\R^N$ in \cite{Goran}.
Furthermore, in order to tackle the analytic classifications of parabolic germs one needs to further extend and adapt the theory of complex dimensions as in \cite{KMRR25}.

Finally, note that, in contrast to the Minkowski dimension, the Hausdorff dimension would not extract us any relevant information from orbits of dynamical systems. The reason for this stems from the countable stability of the Hausdorff dimension which renders all of the orbits of the dynamical systems to have either dimension 1 in the continuous case, or 0 in the discrete case. On the other hand, the lack of the countable stability of the Minkowski dimension is exactly the reason which makes it interesting and useful for the fractal analysis of orbits of dynamical systems. Of course, as it is well known, an attractor of a dynamical system can have nontrivial Hausdorff dimension (strange attractors such as Lorenz or H\'enon, etc). However, in all cases mentioned above, as well as in this paper the attractor is either a point (possibly at infinity) or a piecewise smooth curve, hence of trivial Hausdorff dimension.

\section{PWS slow-fast Li\'enard systems and statement of results}
\label{sec-Motivation}
We consider a PWS slow-fast Li\'enard equation
\begin{align}\label{PWSLienard}
 X_-: \begin{cases}
  \dot x=y-F_-(x) ,\\ 
  \dot y=\epsilon G_-(x), \end{cases}
  \   \text{for }x\le 0,\quad
 X_+: \begin{cases}
  \dot x=y-F_+(x) ,\\ 
  \dot y=\epsilon G_+(x), \end{cases}  \  \text{for }x\ge 0,
  \end{align}
where $0<\epsilon\ll 1$ is a singular perturbation parameter and $F_\pm$ and $G_\pm$ are $C^\infty$-smooth functions. We assume that $X_-$ and $X_+$ have a slow-fast Hopf point at the origin $(x, y) = (0,0)$, that is, they satisfy (see also \cite[Definition 1.1]{DRbirth})
\begin{equation}
\label{assum1}
F_\pm(0)=F_\pm'(0)=G_\pm(0)=0, \ F_\pm''(0)>0 \text{ and } G_\pm'(0)<0. 
\end{equation} 

We say that system \eqref{PWSLienard} satisfying \eqref{assum1} has a PWS slow-fast Hopf point at $(x,y)=(0,0)$. For $\epsilon=0$, system \eqref{PWSLienard} has the curve of singularities
$$ S=\{(x,F_-(x)) \ | \ x< 0\}\cup \{(0,0)\}\cup\{(x,F_+(x)) \ | \ x> 0\}.$$

We denote by $ S_-$ (resp. $ S_+$) the branch of $S$ contained in $x< 0$ (resp. $x> 0$). We refer to Figure \ref{fig-Motivation}. From \eqref{assum1}, it follows that (near the PWS slow-fast Hopf point) $ S_-$ (resp. $ S_+$) consists of normally repelling (resp. attracting) singularities. Then we can define the slow vector field of \eqref{PWSLienard} along $S$, near $(x,y)=(0,0)$, as the following PWS vector field
\begin{equation}\label{PWSslowdyn}
X_-^s: \ \frac{dx}{d\tau}=\frac{G_-(x)}{F_-'(x)}, \ x\le 0, \ \ \quad X_+^s: \ \frac{dx}{d\tau}=\frac{G_+(x)}{F_+'(x)}, \ x\ge 0,
\end{equation}
 where $\tau=\epsilon t$ is the slow time ($t$ denotes the fast time in \eqref{PWSLienard}). Its flow is called the slow dynamics. Using \eqref{assum1}, it is clear that \eqref{PWSslowdyn} has a removable singularity in $x=0$ and the slow dynamics is regular and it points from the attracting branch $S_+$ to the repelling branch $ S_-$. Notice that the slow vector field \cite[Chapter 3]{DDR-book-SF} of the smooth slow-fast system $X_-$ (resp. $X_+$) along $ S_-$ (resp. $ S_+$) is given by $X_-^s$ (resp. $X_+^s$) defined in \eqref{PWSslowdyn}. See also \cite{DHGener}.
\smallskip

In this paper we focus on fractal analysis of $3$ different types of limit periodic sets of \eqref{PWSLienard}, for $\epsilon=0$ (see Figure \ref{fig-Motivation}): (a) the PWS slow-fast Hopf point at $(x,y)=(0,0)$ (Section \ref{sectionHopf}), (b) bounded canard cycles 
$\Gamma_{\hat y}$, $\hat y>0$, consisting of the fast horizontal orbit of \eqref{PWSLienard} passing through the point $(0,\hat y)$ and the portion of $S$ between the $\omega$-limit point $(\omega(\hat y),\hat y)\in S_+$ and the $\alpha$-limit point $(\alpha(\hat y),\hat y)\in S_-$ of that orbit (Section \ref{sectionbounded}), and (c) an unbounded canard cycle consisting of $S$ and a part at infinity (Section \ref{sectionunbounded}). When we deal with the canard cycles in (b) and (c), we need some additional assumptions on the functions $F_\pm$ and $G_\pm$:
 \begin{equation}
\label{assum2}
F_-'(x)<0, \ G_-(x)>0, \ \forall x\in L_-, \quad F_+'(x)>0, \ G_+(x)<0, \ \forall x\in L_+, 
\end{equation}
where $L_-=[\alpha(\hat y),0[$ and $L_+=]0,\omega(\hat y)]$ in case (b) and $L_-=]-\infty,0[$ and $L_+=]0,\infty[$ in case (c). The assumptions in \eqref{assum2} imply that the slow vector field \eqref{PWSslowdyn} is well-defined on the closure $\overline {L_-\cup L_+}$ and it has no singularities.

\begin{figure}[htb]
	\begin{center}
		\includegraphics[width=6.9cm,height=6.5cm]{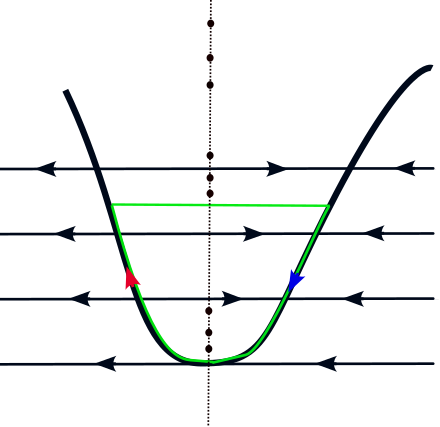}
		{\footnotesize
        \put(-29,148){$S_+$}
        \put(-111,188){$x=0$}
        \put(-71,99){$\Gamma_{\hat y}$}
        \put(-100,147){$y_0$}
        \put(-100,159){$y_1$}
        \put(-100,173){$y_2$}
\put(-100,119){$y_0$}
        \put(-100,106){$y_1$}
        \put(-100,99){$y_2$}
        \put(-101,49){$y_0$}
        \put(-101,40){$y_1$}
        \put(-101,33){$y_2$}
        \put(-59,62){$I_+$}
        \put(-152,62){$I_-$}
        \put(-44,94){$(\omega (\hat y),\hat y)$}
        \put(-191,94){$(\alpha (\hat y),\hat y)$}
         \put(-158,133){$S_-$}
             }
         \end{center}
 \caption{The phase portrait of \eqref{PWSLienard} for $\epsilon=0$, with indication of the slow dynamics along the curve of singularities $S$. Orbits $U=\{y_0,y_1,\dots\}$ generated by the slow relation function $H$ can converge to the PWS slow-fast Hopf point $(x,y)=(0,0)$, a canard cycle $\Gamma_{\hat y}$ (green) or the unbounded canard cycle.}
	\label{fig-Motivation}
\end{figure}

The canard cycles considered throughout this paper are, in fact, crossing canard cycles according to Filippov's convention \cite{filippov1988differential}. More precisely, when one deals with piecewise smooth vector fields, one can define sewing and sliding regions in the switching locus $\Sigma = \{x = 0\}$, which are given by
\begin{equation*}
\begin{array}{ccc}
  \Sigma^{w} & = & \big{\{}(x,y)\in\Sigma \ ; \ (y - F_{-}(x))(y - F_{+}(x)) > 0\big{\}},  \\
\Sigma^{s} & = & \big{\{}(x,y)\in\Sigma\ ; \  (y - F_{-}(x))(y - F_{+}(x)) < 0\big{\}}, 
\end{array}
\end{equation*}
respectively. It follows directly from assumptions in \eqref{assum1} that $\Sigma^{s} = \emptyset$ and $\Sigma^{w} = \Sigma\backslash\{0\}$.

Slow divergence integrals (see \cite[Chapter 5]{DDR-book-SF} and \cite{DHGener}) play an important role in fractal analysis of the limit periodic sets defined above. The slow divergence integral of $X_-$ (resp. $X_+$) associated with the segment $[\alpha( y),0]$ (resp. $[0,\omega(y)]$) are given by
\begin{equation}\label{PWSSDI}
I_-(y):=-\int_{\alpha(y)}^{0}\frac{F'_-(x)^2}{G_-(x)}dx<0, \ I_+(y):=-\int_{\omega(y)}^{0}\frac{F_+'(x)^2}{G_+(x)}dx<0,
\end{equation}
respectively, where $\alpha(y)<0$, $F_-(\alpha(y))=y$, $\omega(y)>0$ and $F_+(\omega(y))=y$. The argument $y>0$ of $I_\pm$ is close to $y=0$ (case (a)), $y=\hat y$ (case (b)) or large enough (case (c)). It is not difficult to see that $I_\pm'(y)<0$ and $I_\pm(y)\to 0$ as $y$ tends to $0$. We also define
\begin{equation}
    \label{SDI-total}
    I(y):=I_+(y)-I_-(y).
\end{equation}

Our goal is to compute the Minkowski dimension of orbits generated by so-called slow relation function $H$ (or its inverse) defined by
\begin{equation}\label{slow-relation-def}
I_-(H(y))=I_+(y).
\end{equation}

See \cite[Section 4]{Dbalanced} for more details concerning the slow relation function in the framework of smooth slow-fast systems. We denote by $U$ the orbit of $y_0>0$ by $H$, that is, $U=\{y_l=H^l(y_0) \ | \ l\in\mathbb N\}$, in which $H^l$ is the $l$-fold composition of $H$. In Section \ref{sectionHopf} (resp. Sections \ref{sectionbounded} and \ref{sectionunbounded}) we consider orbits $U$ that tend to $0$ (resp. $\hat y$ and $\infty$). 
\smallskip

\subsection{Fractal analysis of the PWS slow-fast Hopf point}\label{sectionHopf}

In this section we consider \eqref{PWSLienard} in a small neighborhood of the PWS slow-fast Hopf point $(x,y)=(0,0)$. Since the integrals $I_-$ and $I_+$ are (continuous) decreasing functions and tend to zero as $y\to 0$, it is clear that, for each $y>0$ small enough, there is a unique $H(y)>0$ such that \eqref{slow-relation-def} holds. Analogously, for each $y>0$ small enough, there is a unique $H^{-1}(y)>0$ such that $I_-(y)=I_+(H^{-1}(y))$. Furthermore, we also assume that there is a small $y^*>0$ such that $I$ defined in \eqref{SDI-total} is nonzero in the open interval $]0,y^*[$.

Now, given $y_0\in ]0,y^*[$, if $I>0$ (resp. $I<0$) on $]0,y^*[$ then we denote by $U$ the orbit $\{y_0,y_1,y_2,\dots\}$ defined by
\begin{center}
$I_-(y_{l+1})=I_+(y_l)$ \ (resp. $I_-(y_{l})=I_+(y_{l+1})$), \ with $l\ge 0$.
\end{center}

Observe that $U$ is the orbit of $y_0$ by $H$ (resp. $H^{-1}$) and it tends monotonically to $0$ as $l\to\infty$. 
Conversely, if $y_0>0$ and the orbit of $y_0$ by $H$ (resp. $H^{-1}$) tends monotonically to $0$, then $I>0$ (resp. $I<0$) in the open interval $]0,y^*[$, for a small $y^*>0$.

\begin{theorem}\label{thm1}
Consider a PWS slow-fast Li\'enard system \eqref{PWSLienard} and assume that \eqref{assum1} is satisfied. Given $y_0\in ]0,y^*[$, let $U$ be the orbit with the initial point $y_0$ (as defined above). Then $\dim_B U$ exists and 
\begin{equation}
    \label{formula-BOX}
   \dim_BU\in\left \{\frac{m-1}{m+1} \ | \ m=1,2,\dots\right\}\cup\{1\}.
\end{equation}

If $\dim_BU\ne 0,1$, then $U$ is Minkowski nondegenerate. These results do not depend on the choice of the initial point $y_0\in ]0,y^*[$.  
\end{theorem}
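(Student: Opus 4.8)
The plan is to reduce the computation of $\dim_B U$ to a question about the rate at which the orbit $U=\{y_l\}$ tends to $0$, to read that rate off from the behaviour of the slow relation function $H$ near $y=0$, and finally to obtain the behaviour of $H$ from the asymptotic expansions of $I_-$ and $I_+$. First I would establish these expansions. Writing $\omega(y),\alpha(y)$ via $F_+(\omega(y))=y=F_-(\alpha(y))$ and using $F_\pm(x)=\tfrac12 F_\pm''(0)x^2(1+O(x))$ from \eqref{assum1}, the endpoints are smooth functions of $\sqrt y$. Since $F_\pm'(x)^2/G_\pm(x)$ has a removable singularity at $x=0$ (a double zero over a simple zero) and is hence smooth up to $x=0$, each integral $I_\pm(y)=\pm\int_0^{\omega(y)}\cdots\,dx$ is a smooth function of $u=\sqrt y$ vanishing to second order in $u$. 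This gives expansions $I_\pm(y)=c_\pm y+d_\pm y^{3/2}+e_\pm y^2+\cdots$ in half-integer powers of $y$, with leading coefficient $c_\pm=F_\pm''(0)/G_\pm'(0)<0$ and \emph{no logarithmic terms}.

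Next I would extract the behaviour of $H$. As $I_-$ is strictly monotone near $0$, one has $H=I_-^{-1}\circ I_+$, and since $I_-^{-1}(I_-(y))=y$ the mean value theorem gives $H(y)-y=(I_-^{-1})'(\xi)\,I(y)$ with $\xi$ between $I_+(y)$ and $I_-(y)$, so that $H(y)=y+\tfrac1{c_-}I(y)(1+o(1))$ because $(I_-^{-1})'\to 1/c_-$. Two regimes appear. If $c_+\neq c_-$ then $I(y)$ has leading term $(c_+-c_-)y$, the map $H$ (or $H^{-1}$) is a genuine contraction $y_{l+1}\simeq\lambda y_l$ with $0<\lambda<1$, the orbit decays geometrically, and a geometric null sequence has $\dim_B U=0$ (the value $m=1$). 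If $c_+=c_-$ the linear terms cancel and $I(y)=\kappa y^{p}(1+o(1))$, where $p$ is the order of the first non-vanishing term of $I$; by the half-integer expansion above, $p\in\{1+\tfrac k2 \mid k\ge 1\}=\{\tfrac32,2,\tfrac52,\dots\}$ whenever $I$ is not flat, and $\kappa/c_-<0$ for whichever branch ($H$ or $H^{-1}$) converges to $0$.

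In this neutral case the recursion reads $y_l-y_{l+1}\simeq C\,y_l^{p}$ with $C>0$ and $p>1$. Setting $w_l=y_l^{1-p}$ one finds $w_{l+1}-w_l\to (p-1)C$, hence $w_l\sim (p-1)C\,l$ and $y_l\sim\big((p-1)Cl\big)^{-1/(p-1)}$, a genuine polynomial decay $y_l\simeq l^{-\alpha}$ with $\alpha=1/(p-1)$. Invoking the standard relation between the box dimension of a monotone null sequence and its decay rate (see \cite{EZZ,tricot}), a sequence with $y_l\simeq l^{-\alpha}$ satisfies $\dim_B\{y_l\}=\tfrac1{1+\alpha}$ and is Minkowski nondegenerate; substituting $\alpha=1/(p-1)$ yields $\dim_B U=\tfrac{p-1}{p}=\tfrac{k}{k+2}=\tfrac{m-1}{m+1}$ with $m=k+1\ge 2$. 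Finally, when $I$ is flat at $0$ (all expansion coefficients coincide, yet $I\neq 0$ on $]0,y^*[$ by hypothesis) the decay is slower than any power, i.e.\ $\alpha\to 0^+$, and $\dim_B U=1$. Together these three cases exhaust the claimed set $\{(m-1)/(m+1)\mid m\ge 1\}\cup\{1\}$. Since only the tail of $(y_l)$, governed by $H$ near $0$, enters the argument, the dimension is independent of the choice of $y_0\in\,]0,y^*[$, and bi-Lipschitz invariance of $\dim_B$ makes the answer insensitive to how $U$ is realised as a subset of $\R$.

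The hard part will be concentrated in the first two steps. One must justify rigorously that $I_\pm$ admit asymptotic expansions purely in half-integer powers of $y$ — so that the admissible leading orders $p$ are exactly $\tfrac32,2,\tfrac52,\dots$ and no other exponents appear, which is precisely what pins the dimension to the stated discrete set — and one must control the remainders uniformly enough to upgrade the pointwise asymptotics $H(y)=y+\tfrac1{c_-}I(y)(1+o(1))$ into the genuine asymptotic equivalence $y_l\sim\mathrm{const}\cdot l^{-\alpha}$ that the sequence lemma needs for Minkowski nondegeneracy. The two degenerate endpoints, $p=1$ (geometric, $\dim_B U=0$) and $p=\infty$ (flat, $\dim_B U=1$), must then be handled separately, since the sequence lemma delivers nondegeneracy only in the polynomial regime $\tfrac32\le p<\infty$.
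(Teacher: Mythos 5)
Your proposal is correct, and it runs on the same analytic engine as the paper --- asymptotics of $I=I_+-I_-$, the induced gap relation $y_l-y_{l+1}\simeq C\,y_l^{p}$, and the sequence-dimension results of \cite{EZZ} --- but it reaches that engine by a genuinely different route. The paper first normalizes \eqref{PWSLienard} via the $\Sigma$-equivalence \eqref{PWShomeo} into the form \eqref{PWSLienardNormal} with $F_\pm(x)=x^2$, checks that the slow relation function is unchanged, and then proves the sharper Theorem \ref{thm2}: the leading exponent of $I$ equals $\tfrac{m_0(\bar G)+1}{2}$, where $m_0(\bar G)$ is the multiplicity of $x=0$ as a zero of $\bar G(x)=g_-(x)+g_+(-x)$; the key step there is the cancellation of the polynomial parts of the integrands, coming from $T_-(x)=-T_+(-x)$. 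You instead expand $I_\pm$ directly in the original coordinates as smooth functions of $u=\sqrt{y}$ (via the removable singularity of $F_\pm'^2/G_\pm$ and the smoothness of $\omega,\alpha$ in $u$), which pins the admissible leading exponents to $\{1\}\cup\{\tfrac32,2,\tfrac52,\dots\}\cup\{\infty\}$. That is exactly enough for Theorem \ref{thm1}, which only asserts membership of $\dim_BU$ in a discrete set, and your route is shorter and more elementary for that purpose; what the paper's normal-form detour buys is the coordinate-free invariant $m_0(\bar G)$ and the bijection \eqref{1-1}, which is what gets used later (e.g.\ $m_0(\bar G)=1$ if and only if $\beta_-\neq\beta_+$ in Section \ref{section-applications}).

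One point to tighten. The lemma you invoke in the form ``$y_l\simeq l^{-\alpha}$ implies $\dim_B\{y_l\}=\tfrac{1}{1+\alpha}$ and Minkowski nondegeneracy'' is not true if $\simeq$ means only bounded ratios: a decreasing null sequence can satisfy $y_l\simeq l^{-\alpha}$ while its range collapses onto an essentially geometric set (take $y_l$ constant on dyadic blocks), which has dimension $0$. The correct primitive is the gap-based statement \cite[Theorem 1 and Remark 1]{EZZ}, i.e.\ $y_l-y_{l+1}\simeq y_l^{p}$ with $p>1$ implies nondegeneracy and $\dim_B=1-\tfrac1p$; the two-sided gap bound is what rules out clustering. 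Your recursion supplies precisely this hypothesis, and you also derive the genuine asymptotics $y_l\sim\bigl((p-1)Cl\bigr)^{-1/(p-1)}$, so the substance of your argument is sound --- only the citation should be aimed at the gap-based result rather than at a pure decay-rate statement.
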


\begin{remark}\label{remark-old-new}
  From \eqref{formula-BOX} in Theorem \ref{thm1} it follows that $\dim_BU$ can take the following 
discrete set of values: $0,\frac{1}{3},\frac{1}{2},\frac{3}{5},\frac{2}{3},\frac{5}{7},\dots,1$. We point out that the values $\frac{1}{3},\frac{3}{5},\frac{5}{7},\dots$ ($m$ even) and $1$ are found near smooth slow-fast Hopf points (see \cite{BoxNovo}). Besides these old values of the Minkowski dimension, the PWS slow-fast Hopf point in \eqref{PWSLienard} also produces infinitely many new values: $0,\frac{1}{2},\frac{2}{3},\dots$ ($m$ odd). See also Theorem \ref{thm2} and Remark \ref{remark-normal}.
\end{remark}
The proof of Theorem \ref{thm1} goes as follows (for more details we refer to Section \ref{proofThm1}). Firstly, using assumptions \eqref{assum1} we can write $$F_\pm(x)=x^2f_\pm(x),$$
where $f_-$, $f_+$ are $C^\infty$-smooth functions and $f_\pm(0)>0$. Then, it can be easily checked that the homeomorphism
\begin{align}\label{PWShomeo}
 T(x,y)= \begin{cases}
  (x\sqrt{f_-(x)},y), \ x<0,\\ 
  (x,y), \ x=0,\\
  (x\sqrt{f_+(x)},y), \ x>0,
 \end{cases}
  \end{align}
is a $\Sigma$-equivalence in the sense of \cite[Definition 2.20]{GST}, which brings $X_-$ (resp. $X_+$) defined in \eqref{PWSLienard}, locally near $(x,y)=(0,0)$, into $\widetilde X_-$ (resp. $\widetilde X_+$), after multiplication by $\psi'_->0$ (resp. $\psi'_+>0$), where  
\begin{align}\label{PWSLienardNormal}
 \widetilde X_-: \begin{cases}
  \dot x=y-x^2 ,\\ 
  \dot y=\epsilon G_-(\psi_-(x))\psi'_-(x), \end{cases}
     x\le 0,\
 \widetilde X_+: \begin{cases}
  \dot x=y-x^2 ,\\ 
  \dot y=\epsilon G_+(\psi_+(x))\psi'_+(x), \end{cases} x\ge 0,
  \end{align}
  and $\psi_\pm$ is the inverse of $x\to x\sqrt{f_\pm(x)}$. System \eqref{PWSLienardNormal} has a Li\'enard form similar to \eqref{PWSLienard}, with a PWS slow-fast Hopf point at $(x,y)=(0,0)$. We show that \eqref{PWSLienard} and \eqref{PWSLienardNormal} have the same slow relation function (Section \ref{proofThm1}), and therefore they produce the same orbits. Thus, it suffices to give a complete fractal classification of \eqref{PWSLienardNormal}, using the Minkowski dimension. Such fractal classification is given in Theorem \ref{thm2} below. In summary, Theorem \ref{thm1} follows from Theorem \ref{thm2}.
  
 We define
  \begin{equation}\label{FunctionMultip}
  \bar{G}(x):=G_-(\psi_-(x))\psi'_-(x)+G_+(\psi_+(-x))\psi'_+(-x).     
  \end{equation}
  
In what follows, $m_0(\bar G)$ denotes the multiplicity of the zero $x=0$ of $\bar G$, and $g_{m_0(\bar G)}\ne 0$ denotes the $m_0(\bar G)$-th Taylor coefficient of $\bar G$ about $x=0$, if $m_0(\bar G)$ is finite. In Section \ref{proofThm1} we prove the following result.

  \begin{theorem}\label{thm2} Consider \eqref{PWSLienardNormal} and let $H$ be its slow relation function. Given $y_0\in ]0,y^*[$, then the following statements hold.
  \begin{enumerate}
      \item  Suppose that $1\le m_0(\bar G)<\infty$. If $(-1)^{m_0(\bar G)+1}g_{m_0(\bar G)}>0$ (resp. $<0$), then the orbit $U=\{y_l \ | \ l\in\mathbb N\}$ of $y_0$ by $H$ (resp. $H^{-1}$) tends monotonically to $0$ and holds the bijective correspondence
\begin{equation}
      \label{1-1}
    m_0(\bar G)=\frac{1+\dim_BU}{1-\dim_B U}.  
  \end{equation}
  Moreover, if $1< m_0(\bar G)<\infty$, then $U$ is Minkowski nondegenerate.
  \item  If $m_0(\bar G)=\infty$, then $\dim_BU=1$.
  \end{enumerate}
  The above results do not
depend on the choice of the initial point $y_0\in ]0,y^*[$.
  \end{theorem}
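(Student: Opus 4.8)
The plan is to reduce the computation of $\dim_B U$ to the asymptotic behavior of the slow relation function $H$ near the fixed point $y=0$, and then to relate that asymptotics to the multiplicity $m_0(\bar G)$. First I would analyze the slow divergence integrals $I_\pm$ for the normal form \eqref{PWSLienardNormal}. Since $F_\pm(x)=x^2$ in the normal form, one has $F_\pm'(x)=2x$ and $\omega(y)=\sqrt y$, $\alpha(y)=-\sqrt y$, so the integrals in \eqref{PWSSDI} become explicit one-dimensional integrals in $x$ with integrand $4x^2/\widetilde G_\pm(x)$, where $\widetilde G_\pm(x)=G_\pm(\psi_\pm(x))\psi_\pm'(x)$. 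I would change variables to put everything in terms of $y$ and expand $I_\pm(y)$ as $y\to 0^+$. The key observation is that the difference $I(y)=I_+(y)-I_-(y)$ is, up to sign and a positive factor, governed by the odd/even part of $\bar G$: writing the combined integrand and symmetrizing $x\mapsto -x$, the leading term of $I(y)$ is controlled precisely by the lowest-order nonvanishing Taylor coefficient $g_{m_0(\bar G)}$ of $\bar G$ defined in \eqref{FunctionMultip}. The sign condition $(-1)^{m_0(\bar G)+1}g_{m_0(\bar G)}>0$ is exactly what guarantees $I>0$ (so that the forward orbit under $H$, rather than $H^{-1}$, is the one tending monotonically to $0$), consistent with the dichotomy recorded before Theorem \ref{thm1}.

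Next I would extract the asymptotics of $H$ from those of $I_\pm$. From the defining relation \eqref{slow-relation-def}, $I_-(H(y))=I_+(y)$, I would invert the expansion of $I_-$ and compose with that of $I_+$. Since both $I_\pm(y)\to 0$ with $I_\pm'(y)<0$, the linear parts match and $H(y)=y+(\text{higher order})$; the leading correction term is a power of $y$ whose exponent is dictated by $m_0(\bar G)$. Concretely I expect an asymptotic of the form $H(y)=y-c\,y^{(m_0(\bar G)+1)/2}(1+o(1))$ as $y\to 0^+$, with $c>0$ under the stated sign hypothesis. Here the half-integer powers arise from the $\sqrt y$ appearing in $\omega(y)$ and $\alpha(y)$, which is the structural source of the new even-$m$ Minkowski dimensions that do not occur in the smooth case.

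With the asymptotics of $H$ in hand, the final step is to translate the power-law rate of convergence of the sequence $y_l=H^l(y_0)$ into the Minkowski dimension of $U=\{y_l\}$. This is a standard computation: for a recursion $y_{l+1}=y_l - c\,y_l^{p}(1+o(1))$ with $p>1$, one shows $y_l\simeq l^{-1/(p-1)}$, and then a sequence with terms decaying like $l^{-\beta}$ (with $\beta=1/(p-1)$) and gaps $y_l-y_{l+1}\simeq l^{-\beta-1}$ has Minkowski dimension $\dim_B U=\frac{1}{1+\beta}$ and is Minkowski nondegenerate when $\beta>0$ is finite; see the analogous computations in \cite{EZZ,ZupZub} and the results quoted in Section \ref{Minkowski-def-separated}. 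Substituting $p=(m_0(\bar G)+1)/2$ gives $\beta=2/(m_0(\bar G)-1)$ and hence $\dim_B U=\frac{m_0(\bar G)-1}{m_0(\bar G)+1}$, which rearranges exactly to \eqref{1-1}. The degenerate endpoints are handled separately: when $m_0(\bar G)=1$ the correction is linear, $H(y)\sim \lambda y$ with $\lambda\in(0,1)$, forcing geometric convergence and $\dim_B U=0$; when $m_0(\bar G)=\infty$ the function $\bar G$ is flat, the correction is smaller than any power, convergence is slower than any polynomial rate, and $\dim_B U=1$, giving statement (2).

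The main obstacle I anticipate is making the asymptotic expansion of the slow divergence integrals and their inversion rigorous and uniform, rather than merely formal. In particular, one must justify that the relevant Taylor coefficient of $\bar G$ (and not cancellation between the two branches) determines the leading term of $I(y)$, control the $o(1)$ remainders through the composition $I_-^{-1}\circ I_+$, and verify that the resulting remainder in $H$ is uniformly small enough that the passage from the recursion to $y_l\simeq l^{-\beta}$ and then to the Minkowski nondegeneracy estimate goes through with two-sided bounds. Establishing the two-sided $\simeq$ estimates (as opposed to one-sided order-of-magnitude bounds) is precisely what is needed for Minkowski nondegeneracy, and that is where the careful bookkeeping lies; the independence of the final dimension from the initial point $y_0$ then follows because any two orbits are interleaved and the dimension is determined by the tail asymptotics alone.
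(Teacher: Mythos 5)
Your proposal is correct and follows essentially the same route as the paper: expand the slow divergence integrals for the normal form, show that the leading term of $I(y)=I_+(y)-I_-(y)$ is $\simeq y^{(m_0(\bar G)+1)/2}$ because the lower-order parts of the two branches cancel (in the paper this is the identity $T_-(x)=-T_+(-x)$ coming from the vanishing Taylor polynomial of $\bar G$ of degree $m_0(\bar G)-1$), convert this into the gap estimate $y_l-y_{l+1}\simeq y_l^{(m_0(\bar G)+1)/2}$, and invoke the results of \cite{EZZ} for the dimension, nondegeneracy, the exponential case $m_0(\bar G)=1$, and the flat case $m_0(\bar G)=\infty$. The only cosmetic difference is that you invert $I_-$ to obtain the asymptotics of $H$ and then iterate, while the paper bypasses this via the telescoping identity $I(y_l)=I_-(y_{l+1})-I_-(y_l)$ together with the Mean Value Theorem; both hinge on $I_-'$ tending to a nonzero constant at $y=0$, so the two arguments are interchangeable.
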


  \begin{remark}
      \label{remark-normal}
      From \eqref{1-1}, it follows that
      $$\dim_BU=\frac{m_0(\bar G)-1}{m_0(\bar G)+1}.$$
      This and Statement 2 of Theorem \ref{thm2} imply \eqref{formula-BOX}.

      If we assume that $F$ and $G$ in \eqref{PWSLienard-Intro-FG} are $C^\infty$-smooth, then the function $\bar G$ defined in \eqref{FunctionMultip} is even and, as a consequence of Theorem \ref{thm2}, we have the following sequence of values of $\dim_BU$: $1, \frac{1}{3},\frac{3}{5},\frac{5}{7},\dots$.
  \end{remark}

\subsection{Fractal analysis of bounded balanced canard cycles}\label{sectionbounded}

In this section we focus on the fractal analysis of bounded balanced canard cycles $\Gamma_{\hat y}$, with $\hat y>0$. We call $\Gamma_{\hat y}$ a balanced canard cycle if $I(\hat y)=0$, with $I$ being the integral defined in \eqref{SDI-total}. From now on, we assume that $\Gamma_{\hat y}$ is balanced. Due to Equations \eqref{assum2}, \eqref{PWSSDI} and the Implicit Function Theorem, it follows that there is a function $H(y)$ such that $H(\hat y)=\hat y$ and 
$$I_-(H(y))=I_+(y),$$
for $y$ kept close to $\hat y$ (see \eqref{slow-relation-def}). If we differentiate this last equation, we obtain $H'>0$ (recall that $I_\pm'(y)<0$). This implies that orbits generated by $H$ are monotone.

Assume that there exists a small $y^*>0$ such that $I$ is nonzero in the open interval $]\hat y,\hat y+y^*[$. Given $y_0\in ]\hat y,\hat y+y^*[$, if $I>0$ (resp. $I<0$) in $]\hat y,\hat y+y^*[$, then $U$ denotes the orbit $\{y_0,y_1,y_2,\dots\}$ defined by
\begin{center}
$I_-(y_{l+1})=I_+(y_l)$ \ (resp. $I_-(y_{l})=I_+(y_{l+1})$), \ with $l\ge 0$.
\end{center}

Notice that $U$ is the orbit of $y_0$ by $H$ (resp. $H^{-1}$) and it converges monotonically to $\hat y$ as $l\to\infty$. See also Section \ref{sectionHopf}.

Denote by $m_{\hat y}(I)$ the multiplicity of the zero $y=\hat y$ of $I$. When $m_{\hat y}(I)$ is finite, $i_{m_{\hat y}(I)}\ne 0$ is the $m_{\hat y}(I)$-th Taylor coefficient of $I$ about $y=\hat y$. The fractal classification of $\Gamma_{\hat{y}}$ is given in Theorem \ref{thm3} and it will be proved in Section \ref{proofThm3}.

\begin{theorem}
    \label{thm3}
Consider system \eqref{PWSLienard} and let $H$ be its slow relation function defined near a balanced canard cycle $\Gamma_{\hat y}$. If $y_0\in ]\hat y,\hat y+y^*[$, then the following statements are true.
  \begin{enumerate}
      \item  Suppose that $1\le m_{\hat y}(I) <\infty$. If $i_{m_{\hat y}(I)}>0$ (resp. $<0$), then the orbit $U=\{y_l \ | \ l\in\mathbb N\}$ of $y_0$ by $H$ (resp. $H^{-1}$)  tends monotonically to $\hat y$ and the following bijective correspondence holds
\begin{equation}
      \label{1-1-balanced-PWS}
    m_{\hat y}(I)=\frac{1}{1-\dim_B U}.  
  \end{equation}
  Moreover, if $1< m_{\hat y}(I)<\infty$, then $U$ is Minkowski nondegenerate.
  \item  If $m_{\hat y}(I)=\infty$, then $\dim_BU=1$.
  \end{enumerate}
  The above results do not
depend on the choice of the initial point $y_0\in ]\hat y,\hat y+y^*[$.
  \end{theorem}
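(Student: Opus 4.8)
The plan is to reduce the asymptotic analysis of the orbit $U=\{y_l\}$ near the fixed point $\hat y$ to a study of the slow relation function $H$ and, via the relation $I_-(H(y))=I_+(y)$, to the local behavior of the total slow divergence integral $I=I_+-I_-$ at $\hat y$. Since $I(\hat y)=0$ (balanced) and $I$ has finite multiplicity $m:=m_{\hat y}(I)$ with leading Taylor coefficient $i_m\ne 0$, I would first write $I(y)=i_m(y-\hat y)^m+o((y-\hat y)^m)$. The condition on the sign of $i_m$ determines whether $H$ or $H^{-1}$ produces an orbit decreasing to $\hat y$: differentiating $I_-(H(y))=I_+(y)$ at $\hat y$ gives $H'(\hat y)=I_+'(\hat y)/I_-'(\hat y)=1$ (because $I(\hat y)=0$ forces $I_+'(\hat y)=I_-'(\hat y)$ only in the generic degenerate case; more carefully, $H'(\hat y)=1$ follows from $m\ge 1$ and the structure of $I$). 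Thus $\hat y$ is a parabolic (weakly attracting or repelling) fixed point, and the direction of monotone convergence is dictated precisely by the sign condition $(\,i_m>0$ for $H$, $i_m<0$ for $H^{-1})$.

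The key step is to obtain a sharp asymptotic expansion of the displacement map. Setting $u=y-\hat y$ and writing $H(y)-y$ in terms of $I$, I would use $I_-(H(y))-I_+(y)=0$ together with $I_-(H(y))=I_-(y)+I_-'(y)(H(y)-y)+\dots$ to extract
\begin{equation*}
H(y)-y=\frac{I_+(y)-I_-(y)}{I_-'(y)}+\text{higher order}=\frac{I(y)}{I_-'(\hat y)}+o(u^m).
\end{equation*}
Since $I_-'(\hat y)<0$ and $I(y)\sim i_m u^m$, this yields $H(y)-y\sim c\,u^m$ with $c=i_m/I_-'(\hat y)$, so the appropriate branch (that making the orbit decrease to $\hat y$) satisfies a recurrence $u_{l+1}-u_l\sim -|c|\,u_l^m$ with $u_l=y_l-\hat y\to 0^+$. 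This is exactly the standard parabolic recurrence whose orbit has the well-understood power-law decay $u_l\simeq l^{-1/(m-1)}$ for $m>1$ (and geometric/logarithmic behavior in the boundary cases), which I would make precise by a discrete integration / Stolz–Cesàro argument comparing $\sum (u_{l+1}^{1-m}-u_l^{1-m})$ to $l$.

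Having pinned down $u_l\simeq l^{-1/(m-1)}$, the final step computes $\dim_B U$ from the geometry of the sequence $\{u_l\}$ on the real line. Here I would invoke the known result (from the Minkowski-dimension theory of one-dimensional orbits, e.g. the Lapidus–Pomerance–type estimate for sequences $a_l\simeq l^{-\gamma}$) stating that a strictly monotone sequence converging to a point with $a_l-a_{l+1}\simeq l^{-\gamma-1}$, equivalently $a_l\simeq l^{-\gamma}$ with $\gamma=1/(m-1)$, has Minkowski dimension $\dim_B U=\frac{1}{1+\gamma}=\frac{m-1}{m}$; rearranging gives precisely $m=\frac{1}{1-\dim_B U}$, which is the claimed bijective correspondence \eqref{1-1-balanced-PWS}. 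Minkowski nondegeneracy for $1<m<\infty$ follows because the asymptotics $u_l\simeq l^{-1/(m-1)}$ are two-sided (the $\simeq$ gives matching upper and lower bounds), so both Minkowski contents are finite and positive. Since a bi-Lipschitz change preserves box dimension and the reduction to normal form does not affect $I$ up to such equivalence, independence of the base point $y_0$ is immediate: any two orbits are eventually interlaced and share the same tail asymptotics.

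The main obstacle I anticipate is making the displacement asymptotic $H(y)-y\sim c\,u^m$ genuinely rigorous to leading order, because $I_\pm$ and hence $H$ are only $C^\infty$ (not analytic), so I must control the remainder in the expansion of $I_-(H(y))$ uniformly; the delicate point is showing that the $o(u^m)$ error terms do not corrupt the leading power governing the orbit's decay rate, and that the case $m=1$ (geometric convergence, $\dim_B U=0$) and the case $m=\infty$ (flat contact, forcing $\dim_B U=1$) are handled consistently with the finite-$m$ formula. I would isolate these as the technical lemmas and treat the generic $1<m<\infty$ case as the heart of the proof.
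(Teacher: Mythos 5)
Your strategy is sound and is essentially the paper's: reduce the multiplicity of $I$ at $\hat y$ to the multiplicity of the displacement $y-H(y)$, then invoke the dictionary between that multiplicity and the Minkowski dimension of orbits of a one-dimensional map with a (weakly) attracting fixed point. The difference is in execution. The paper obtains an \emph{exact} factorization $I(y)=\zeta(y)\,(y-H(y))$ with $\zeta$ smooth and positive, by writing
$I(y)=I_+(y)-I_-(H(y))+I_-(H(y))-I_-(y)=\int_{\alpha(y)}^{\alpha(H(y))}\frac{F'_-(x)^2}{G_-(x)}\,dx$
and applying the Fundamental Theorem of Calculus; this identity holds on the whole interval, so it transfers the multiplicity to $y-H(y)$ uniformly in all cases, \emph{including} $m=\infty$, after which the paper simply cites the known result (Theorem \ref{th-EZZ}, from \cite{EZZ,MRZ}), which already contains the correspondence \eqref{1-1-balanced-PWS}, the Minkowski nondegeneracy for $1<m<\infty$, the $m=\infty$ case, and independence of the initial point. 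You instead Taylor-expand $I_-(H(y))$ to get $H(y)-y\sim c\,u^m$ and then re-derive the dictionary by solving the parabolic recurrence ($u_l\simeq l^{-1/(m-1)}$ via Stolz--Ces\`aro) and quoting the dimension of $l^{-\gamma}$-type sequences. That is precisely the content of the cited theorem, so your route is more self-contained but obliges you to handle $m=1$, $m=\infty$, the remainder control for merely smooth $I_\pm$, and base-point independence as separate technical lemmas --- all of which the citation (or the exact factorization) gives for free. Note that your own MVT version of the expansion, $y-H(y)=-I(y)/I_-'(\xi)$ with $\xi$ between $H(y)$ and $y$, is already exact and would serve the same purpose as the paper's $\zeta$; you weaken it unnecessarily by passing to asymptotics.

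One local claim is wrong as written: $H'(\hat y)=1$ does \emph{not} ``follow from $m\ge 1$''. Differentiating $I_-(H(y))=I_+(y)$ at $\hat y$ gives $H'(\hat y)=I_+'(\hat y)/I_-'(\hat y)$, which equals $1$ precisely when $I'(\hat y)=0$, i.e.\ when $m\ge 2$. For $m=1$ with $i_1>0$ one gets $H'(\hat y)\in\,]0,1[$ (both $I_\pm'$ are negative), so the fixed point is hyperbolic, the convergence is geometric, and $\dim_BU=0$ --- consistent with the formula, but not via a parabolic fixed point. Since you do isolate $m=1$ as a boundary case at the end, this is an internal inconsistency of the writeup rather than a fatal gap; the fix is to state the trichotomy $m=1$, $1<m<\infty$, $m=\infty$ at the outset and only claim parabolicity in the middle case.
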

  
\begin{remark}\label{remark-balanceddd}
Using \eqref{1-1-balanced-PWS}, it follows that
    $$\dim_B U=\frac{m_{\hat y}(I)-1}{m_{\hat y}(I)}.$$

This result, combined with Statement 2 of Theorem \ref{thm3}, produces the following set of values of $\dim_B U$: $0,\frac{1}{2},\frac{2}{3},\dots,1$. We highlight that, in the PWS setting, we do not obtain new values of the Minkowski dimension in comparison with the smooth setting due to the $C^\infty$-smoothness of $I_-$ and $I_+$ at $y=\hat y$. In other words, the above values also can be produced by balanced canard cycles in the smooth setting (see also \cite{BoxRenato}).
\end{remark}

\subsection{Fractal analysis of PWS classical Li\'enard equations near infinity}\label{sectionunbounded}

In this section we consider a special case of \eqref{PWSLienard}, namely PWS classical  Li\'{e}nard equations of degree $n+1$, which are given by
 \begin{equation}
\label{PWSLienard_inf}
    \begin{vf}
        \dot{x} &=& y-\sum\limits_{k=2}^{n+1} B_k^-x^k, \\
        \dot{y} &=&-\epsilon A_1^{-}x,
    \end{vf} \ \ x\le 0, \ \ 
    \begin{vf}
        \dot{x} &=& y-\sum\limits_{k=2}^{n+1} B_k^+x^k, \\
        \dot{y} &=&-\epsilon A_1^{+}x,
    \end{vf} \ \ x\ge 0,
\end{equation}
where $n \in\mathbb N_1$, $B_{n+1}^\pm\ne 0$, $A_{1}^{\pm}> 0$ and $B_{2}^{\pm}>0$. It is clear that system \eqref{PWSLienard_inf} satisfies \eqref{assum1}. Additionally, it is assumed that \eqref{PWSLienard_inf} satisfies \eqref{assum2} with $L_-=]-\infty,0[$ and $L_+=]0,\infty[$. As a direct consequence of \eqref{assum2}, we have $(-1)^{n+1}B_{n+1}^->0$ and $B_{n+1}^+>0$. 

After a rescaling $(x,t)=(a^{\pm}X,c^{\pm}T)$, we can bring \eqref{PWSLienard_inf} into
\begin{equation}
\label{model-Lienard1}
    \begin{vf}
        \dot{x} &=& y- \left((-1)^{n+1}x^{n+1}+\sum\limits_{k=2}^{n} b_k^-x^k \right)\\
        \dot{y} &=&-\epsilon a_1^{-}x
    \end{vf}
  ´
    \begin{vf}
        \dot{x} &=& y -\left(x^{n+1}+\sum\limits_{k=2}^{n} b_k^+x^k  \right)\\
        \dot{y} &=&  -\epsilon a_{1}^{+}x
    \end{vf}
\end{equation}
where $a_{1}^{\pm}> 0$, $b_{2}^{\pm}>0$ (when $n=1$, we have $b_{2}^{\pm}=1$) and we denote $(X,T)$ again by $(x,t)$. It can be easily checked that \eqref{PWSLienard_inf} and \eqref{model-Lienard1} have the same slow relation function, and therefore it suffices to give a complete fractal classification of \eqref{model-Lienard1} near infinity.

Denote $F_-(x)=(-1)^{n+1}x^{n+1}+\sum\limits_{k=2}^{n} b_k^-x^k$, $F_{+}(x)=x^{n+1}+\sum\limits_{k=2}^{n} b_k^+x^k$ and $G_\pm(x)=- a_{1}^{\pm}x$. Then we have 
$$\frac{F'_\pm(x)^2}{G_\pm(x)}=-\frac{(n+1)^2}{a_1^\pm}x^{2n-1}\left(1+o(1)\right), \ x\to\pm\infty.$$

This and \eqref{PWSSDI} imply that $I_\pm(y)\to -\infty$ as $y\to +\infty$. Let us recall that $I_\pm$ are (strictly) decreasing functions and $I_\pm(y)\to 0$ as $y$ tends to $0$. We conclude that the slow relation function $H$ (see \eqref{slow-relation-def}) is well-defined for all $y>0$.

We suppose that there exists $y^*>0$ large enough such that $I=I_+-I_-$ is nonzero in the open interval $]y^*,\infty[$. Given $y_0\in ]y^*,\infty[$, if $I<0$ (resp. $I>0$) on $]y^*,\infty[$, we define the orbit $U=\{y_0,y_1,y_2,\dots\}$ by
\begin{center}
$I_-(y_{l+1})=I_+(y_l)$ \ (resp. $I_{-} (y_{l})=I_+(y_{l+1})$), \ with $l\ge 0$.
\end{center}
Then $U$ is the orbit of $y_0$ by $H$ (resp. $H^{-1}$) and it tends monotonically to $+\infty$. We define the lower Minkowski dimension of $U$ by 
\begin{equation}
    \label{Minkowski-def-inf}
    \underline\dim_BU=\underline\dim_B \Big\{\frac{1}{y_l^\frac{1}{n+1}} \ | \ l\in \mathbb{N} \Big\},
\end{equation}
and similarly for the upper Minkowski dimension $\overline\dim_BU$. If $\underline\dim_BU=\overline\dim_BU$,
then we denote it by $\dim_BU$ and call it the Minkowski dimension of $U$. In Section \ref{proofThm4}, system \eqref{model-Lienard1} will be studied on the Poincar\'e--Lyapunov
disc of degree $(1, n + 1)$ and the exponent of $y_l$ in \eqref{Minkowski-def-inf} is related to that degree (see also \cite[Section 2]{MinkLienard} for more details). 

We introduce the notation
\begin{align}
    F_{+}(x)-F_{-}(-x)&= \sum\limits_{k=2}^{n} \left(b_k^++(-1)^{k+1}b_k^-\right)x^k \nonumber \\ 
    &=(b_{2}^{+}-b_{2}^{-})x^2+(b_{3}^{+}+b_{3}^{-})x^3+\dots+(b_{n}^{+}+(-1)^{n+1}b_{n}^{-})x^n \nonumber\\
    &=:f_{2}x^2+f_{3}x^3+\dots+f_{n}x^{n}.  \label{Function_f_n}
\end{align}

In the case $n=1$, then $F_{-}(x)-F_{+}(-x)=0$. When one of the coefficients $f_k$ in \eqref{Function_f_n} is nonzero, we denote by $k_0$ the maximal $k$ with the property $f_k\ne 0$. Now we are able to state the main result of this section. Theorem \ref{thm4} is proved in Section \ref{proofThm4}.

\begin{theorem}\label{thm4}
     Consider system \eqref{model-Lienard1} and the associated slow relation function $H$. Given $y_0\in ]y^*,\infty[$, the following statements hold. 
    \begin{enumerate}
        \item Suppose that $a_{1}^{-}\neq a_{1}^{+}$. If $a_{1}^{-}> a_{1}^{+}$ (resp. $a_{1}^{-}< a_{1}^{+}$), then the orbit $U=\{y_l \ | \ l\in\mathbb N\}$ of $y_0$ by $H$ (resp. $H^{-1}$)  tends monotonically to $+\infty$ and 
        \begin{equation*}
            \dim_{B}U=0.
        \end{equation*}
        \item \textbf Suppose that $a_{1}^{-}= a_{1}^{+}$ and that one of the coefficients $f_k$ is nonzero with $k_0\ne n-1$. If $f_{k_0}(1+k_0-n)>0$ (resp. $f_{k_0}(1+k_0-n)<0$), then the orbit $U=\{y_l \ | \ l\in\mathbb N\}$ of $y_0$ by $H$ (resp. $H^{-1}$) tends monotonically to $+\infty$,  $U$ is Minkowski nondegenerate and
        \begin{equation} \label{formula-BOX_inf}
            \dim_{B} U=\frac{n+1-k_0}{n+2-k_0}.
        \end{equation}
    \end{enumerate}
    These results do not
depend on the choice of the initial point $y_0$.
\end{theorem}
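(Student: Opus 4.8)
The plan is to reduce the behavior near infinity to a computable asymptotic expansion of the slow divergence integrals $I_\pm(y)$ as $y \to +\infty$, and then to apply the same Minkowski-dimension machinery used for the bounded cases (Theorems \ref{thm1}--\ref{thm3}), but transplanted to the chart at infinity via the rescaled coordinate $1/y^{1/(n+1)}$ as prescribed in \eqref{Minkowski-def-inf}. First I would invert the relations $F_\pm(\omega(y)) = y$ and $F_-(\alpha(y)) = y$ to obtain asymptotic expansions $\omega(y) \sim y^{1/(n+1)}(1 + \cdots)$ and $\alpha(y) \sim -y^{1/(n+1)}(1 + \cdots)$ as $y \to +\infty$; the leading terms follow from $F_+(x) = x^{n+1}(1+o(1))$ and $F_-(x) = (-1)^{n+1}x^{n+1}(1+o(1))$, and the correction terms are controlled by the subleading coefficients $b_k^\pm$. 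Substituting into \eqref{PWSSDI} and integrating $x^{2n-1}$-type integrands yields leading asymptotics $I_\pm(y) \sim -\frac{(n+1)^2}{2n\,a_1^\pm} \,\omega(y)^{2n}$, i.e.\ $I_\pm(y) \sim -C_\pm\, y^{2n/(n+1)}$. The difference $I(y) = I_+(y) - I_-(y)$ is then read off from these expansions.

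Next I would analyze $I(y)$ in the two regimes of the theorem. In Statement 1, when $a_1^- \neq a_1^+$, the constants $C_\pm$ differ at leading order, so $I(y)$ grows like $y^{2n/(n+1)}$ and does not merely have a finite-multiplicity zero at infinity — it blows up algebraically. The sign of $a_1^- - a_1^+$ fixes the sign of $I$ on $]y^*,\infty[$ and hence selects $H$ versus $H^{-1}$, exactly as the sign conditions do in the earlier theorems. The expected payoff is that the orbit $\{y_l\}$ converges to infinity so rapidly (the gaps in the rescaled variable $1/y_l^{1/(n+1)}$ shrinking geometrically or faster) that the transformed set accumulates at $0$ with trivial Minkowski dimension, giving $\dim_B U = 0$. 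In Statement 2, when $a_1^- = a_1^+$, the leading terms cancel and $I(y)$ is governed by the next-order asymptotics, which are controlled by $F_+(x) - F_-(-x)$ from \eqref{Function_f_n}; here the maximal nonvanishing coefficient $f_{k_0}$ determines the growth exponent of $I(y)$, and I would show $I(y) \sim c\, y^{(n-1+k_0)/(n+1)}$ for an explicit constant $c$ whose sign is $\operatorname{sgn}(f_{k_0}(1+k_0-n))$ — this is where the curious sign factor $(1+k_0-n)$ and the excluded case $k_0 = n-1$ (where the growth exponent would force a degenerate/constant contribution) enter.

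With the asymptotics of $I(y)$ in hand, the heart of the argument is a recursion lemma relating the decay rate of $1/y_l^{1/(n+1)}$ to the growth exponent of $I$. Writing $u_l = 1/y_l^{1/(n+1)}$, the defining relation $I_-(y_{l+1}) = I_+(y_l)$ (or its inverse) becomes, after substituting the power-law asymptotics, a one-dimensional recursion for $u_l$ whose fixed-point-at-zero behavior is dictated by the exponent. I expect this step to mirror the standard ``staircase'' or Landau-type estimates that produce formulas of the shape $\dim_B = (p-1)/p$ for a multiplicity $p$: here the effective multiplicity at infinity is $p = (n+2-k_0)$, yielding \eqref{formula-BOX_inf}. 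Concretely, I would derive $u_{l+1} - u_l \simeq u_l^{\,q}$ for the appropriate exponent $q$ tied to $k_0$ and $n$, and then invoke a known result (of the type in \cite{EZZ} or \cite{MinkLienard}) that the Minkowski dimension of such a sequence equals $1 - 1/q$ together with Minkowski nondegeneracy. Finally I would confirm independence of the dimension from $y_0$, which follows because all orbits share the same asymptotic regime.

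The main obstacle, I expect, will be controlling the \emph{subleading} asymptotics of $I(y)$ with enough precision in Statement 2. The cancellation of the leading $a_1^\pm$-terms means the surviving growth exponent depends delicately on the interplay between the inversion expansions of $\omega(y),\alpha(y)$ and the integration of $F_\pm'(x)^2/G_\pm(x)$; one must track how each $b_k^\pm$ propagates through both the inverse-function expansion and the integral, and verify that the combination $F_+(x)-F_-(-x)$ (rather than the individual $F_\pm$) is what governs the leading surviving term. The excluded resonant case $k_0 = n-1$ signals that at that particular exponent the naive leading term vanishes or becomes logarithmic, so the bookkeeping must be done carefully enough to see exactly when $f_{k_0}$ genuinely dominates. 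Establishing the precise constant and its sign, $\operatorname{sgn}(f_{k_0}(1+k_0-n))$, to correctly assign $H$ versus $H^{-1}$ is the most delicate part of the calculation.
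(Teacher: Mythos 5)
Your strategy is, up to a change of variables, the same as the paper's: what you propose to do directly in the $y$-variable (asymptotic inversion of $F_\pm$, expansion of $I_\pm(y)$ as $y\to\infty$, a recursion for $u_l=y_l^{-1/(n+1)}$, and an appeal to the results of \cite{EZZ}), the paper carries out in the Poincar\'e--Lyapunov chart $x=\bar x/r$, $y=1/r^{n+1}$, where your $u_l$ is exactly the chart variable $r_l$ and your integrals correspond to the slow divergence integrals $J_\pm(r)$ of \eqref{J_-}--\eqref{J_+}, transferred back via invariance of the slow divergence integral. Your quantitative claims are correct: $I_\pm(y)\sim -\frac{(n+1)^2}{2n\,a_1^\pm}\,y^{2n/(n+1)}$, the surviving term after cancellation in Statement 2 has exponent $(n+k_0-1)/(n+1)$, the induced recursion is $u_l-u_{l+1}\simeq u_l^{\,n+2-k_0}$ (giving nondegeneracy and $\dim_B U=1-\frac{1}{n+2-k_0}$ by \cite[Theorem 1]{EZZ}), and in Statement 1 the ratio $u_{l+1}/u_l$ tends to $(a_1^+/a_1^-)^{1/(2n)}\in\,]0,1[$, giving dimension $0$; this matches \eqref{Recursion-konacno-1} and \eqref{box-dim-inftyyy} in the paper.

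There is, however, one concrete error: the single sign you commit to is backwards, and as stated it would prove Statement 2 with $H$ and $H^{-1}$ interchanged. By the convention of Section \ref{sectionunbounded}, the orbit of $H$ (rather than $H^{-1}$) tends monotonically to $+\infty$ precisely when $I<0$ on $]y^*,\infty[$: indeed, $I<0$ means $I_-(H(y))=I_+(y)<I_-(y)$, and since $I_-$ is decreasing this gives $H(y)>y$. Hence Statement 2 requires $\sgn(I)=-\sgn\bigl(f_{k_0}(1+k_0-n)\bigr)$ for large $y$, i.e.\ your constant $c$ must satisfy $\sgn(c)=-\sgn\bigl(f_{k_0}(1+k_0-n)\bigr)$, the opposite of what you assert. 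The extra minus sign is produced by the integration step: the integrand term of order $s^{n+1-k_0}/s^{2n+1}$ carries the factor $1+k_0-n$, but integrating $s^{-n-k_0}$ introduces the denominator $1-n-k_0<0$ (this is visible in the coefficient $\frac{(n+1)(1+k_0-n)}{a_1^+(1-n-k_0)}$ in \eqref{integral-pomocno}). A quick check: take $n=2$, $a_1^\pm=1$, $F_-(x)=-x^3+x^2$, $F_+(x)=x^3+2x^2$, so $f_2=1$ and $f_{k_0}(1+k_0-n)=1>0$; a direct computation gives $I(1000)\approx-1.1\times 10^3<0$, so the orbit of $H$, not of $H^{-1}$, escapes to $+\infty$, as the theorem asserts. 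You rightly flag this sign as the delicate point, but the value you wrote down is the wrong one; once corrected, the rest of your plan goes through and reproduces the paper's argument.
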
 

If $a_{1}^{-}= a_{1}^{+}$ and $f_2=\dots=f_n=0$, then  we have $H(y)=y$ ($I\equiv 0$) and each orbit $U$ is a fixed point of $H$ with the trivial Minkowski dimension ($\dim_{B}U=0$).

\begin{remark}
Suppose that $F_-=F_+$ and it is a polynomial of even degree $n+1$ ($n$ odd) and $G_-=G_+$ ($a_{1}^{-}= a_{1}^{+}$). Then \eqref{Function_f_n} implies that $f_k=0$ for $k$ even (hence, $k_0$ is odd and $k_0\ne n-1$). From \eqref{formula-BOX_inf} it follows that we have the following Minkowski dimensions of $U$: $\frac{1}{2},\frac{3}{4},\dots,\frac{n-4}{n-3},\frac{n-2}{n-1}$ (see also \cite[Remark 1]{MinkLienard}).

In the PWS setting, we get for $n$ odd or even: $0,\frac{1}{2},\frac{3}{4},\frac{4}{5},\dots,\frac{n-2}{n-1},\frac{n-1}{n}$. We refer to Statement 1 and Statement 2 of Theorem \ref{thm4}. The case $k_0= n-1$ is a topic of further study. We believe that in this case the Minkowski dimension is different from $\frac{2}{3}$ and we have to deal with more complicated expansions of functions. 
\end{remark}

\section{Proof of Theorems \ref{thm1}--\ref{thm4}}\label{section-proof-all}
\subsection{Proof of Theorems \ref{thm1} and \ref{thm2}}\label{proofThm1}

We consider \eqref{PWSLienard} and assume that \eqref{assum1} is satisfied. Recall that the integrals $I_-$ and $I_+$ are defined in \eqref{PWSSDI} and $I$ in \eqref{SDI-total}. We denote by $\widetilde I_-,\widetilde I_+,\widetilde I$ the integrals $I_-,I_+,I$ computed for system \eqref{PWSLienardNormal}: 
\begin{equation}\label{PWSSDINormal1}
\widetilde I_\pm(y)=-4\int_{\pm\sqrt y}^{0}\frac{x^2}{G_\pm(\psi_\pm(x))\psi'_\pm(x)}dx, \ \widetilde I(y)=\widetilde I_+(y)-\widetilde I_-(y).
\end{equation}

Since the slow divergence integral is invariant under changes of coordinates and time reparameterizations (see \cite[Chapter 5]{DDR-book-SF}), we have $\widetilde I_\pm (y)=I_\pm(y)$ (this can be easily seen if we use the change of variable $s=\psi_\pm(x)$ in $\widetilde I_\pm$). Now, it is clear that $H$ defined in \eqref{slow-relation-def} is also the slow relation function of \eqref{PWSLienardNormal} ($\widetilde I_-(H(y))=\widetilde I_+(y)$). This implies that Theorem \ref{thm1} follows from Theorem \ref{thm2}. Therefore, in the rest of this section we prove Theorem \ref{thm2} and, from now on, When we refer to \eqref{PWSSDINormal1}, we use $I_-,I_+,I$ instead of $\widetilde I_-,\widetilde I_+,\widetilde I$.

We will prove Theorem \ref{thm2} for $I>0$ on $]0,y^*[$. Let $y_0\in ]0,y^*[$. Then the orbit $U$ of $y_0$ by $H$ tends monotonically to $0$ as $l\to \infty$ ($I_-(y_{l+1})=I_+(y_l)$ with $l\ge 0$, see Section \ref{sectionHopf}). The case where $I<0$ on $]0,y^*[$ can be
treated in a similar fashion.

We introduce the notation 
$$g_\pm(x)=G_\pm(\psi_\pm(x))\psi'_\pm(x),$$
and therefore Equation \eqref{FunctionMultip} can be written as 
$$\bar{G}(x) = g_{-}(x) + g_{+}(-x).$$

From assumption \eqref{assum1} and the definition of $\psi_\pm$ after \eqref{PWSLienardNormal}, it follows that $g_\pm(0)=0$ and $g'_\pm(0)<0$. Moreover, using Taylor series of $g_{\pm}$ at the origin, one can write the integrand in \eqref{PWSSDINormal1} as 
\begin{align}\label{integrand1}
    \frac{x^2}{g_\pm(x)}&=\frac{1}{g'_\pm(0)}x+O(x^2).
\end{align}

For the integral $I(y_l)$ we get  \begin{align}\label{intEE1}
  I(y_l)=& I_+(y_l)-I_-(y_{l+1})+I_-(y_{l+1})-I_-(y_{l})\nonumber \\
  =&4\int_{-\sqrt {y_l}}^{-\sqrt {y_{l+1}}}\frac{x^2}{g_-(x)}dx\nonumber\\
  =& -\frac{2}{g_-'(0)}y_l\int_{\frac{y_{l+1}}{y_l}}^{1}\left (1+O(\sqrt{y_l s})\right)ds.
 \end{align}

In the second step in \eqref{intEE1} we use $I_-(y_{l+1})=I_+(y_l)$ and in the last step we use \eqref{integrand1} and then the change of variable $s=\frac{x^2}{y_l}$.
 
The idea is to compare \eqref{intEE1} with equivalent asymptotic expansions of \eqref{PWSSDINormal1}. There are three cases that must be considered: $m_0(\bar G)=1$, $1< m_0(\bar G)<\infty$ and $m_0(\bar G)=\infty$. \\
\\
(a) $m_0(\bar G)=1$. This holds if, and only if, $g'_-(0)\ne g'_+(0)$. 

Using \eqref{PWSSDINormal1} and \eqref{integrand1}, we get 
 \begin{align}\label{intasymp1}
  I(y)=&-4\int_{\sqrt y}^{0}\frac{x^2}{g_+(x)}dx+4\int_{-\sqrt y}^{0}\frac{x^2}{g_-(x)}dx\nonumber\\
  =& 2\left(\frac{1}{g_+'(0)}-\frac{1}{g_-'(0)}\right)y(1+o(1)),
 \end{align}
where $o(1)$ tends to zero as $y\to 0$. It is clear from \eqref{intasymp1} that $I>0$ is equivalent to $g_-'(0)>g_+'(0)$ (and recall that $g'_\pm(0)<0$). From \eqref{intasymp1} with $y=y_l$, \eqref{intEE1} and $y_l\to 0$ as $l\to\infty$ it follows that
 $$\lim_{l\to\infty}\frac{y_{l+1}}{y_l}=\frac{g_-'(0)}{g_+'(0)}\in ]0,1[.$$
 
This implies that the orbit $U$ converges exponentially to zero as $l\to \infty$, that is, there exist $\lambda\in]0,1[$ and a constant $c>0$ such that $0<y_l\le c\lambda^l$ for all $l$. It follows from \cite[Lemma 1]{EZZ} that $\dim_BU=0$ and the Minkowski dimension does not depend on the choice of $y_0$. This completes the proof of Statement 1 of Theorem \ref{thm2} when $m_0(\bar G)=1$.
\\
\\
(b) $1< m_0(\bar G)<\infty$. This holds if, and only if, $g'_-(0)= g'_+(0)$. Using Taylor series at $0$, one can write the integrand of \eqref{PWSSDINormal1} as
\begin{align}\label{integrand2}
    \frac{x^2}{g_\pm(x)}&=\frac{x^2}{T_\pm(x)+\gamma_\pm x^{m_0(\bar G)}+O(x^{m_0(\bar G)+1})} \nonumber\\
    &=\frac{x^2}{T_\pm(x)}-\frac{\gamma_\pm}{\left(g'_\pm(0)\right)^2} x^{m_0(\bar G)}+O(x^{m_0(\bar G)+1}),
\end{align}
where $T_\pm$ is the Taylor polynomial of degree $m_0(\bar G)-1$ of $g_\pm$ at $x=0$, $T_\pm'(0)=g'_\pm(0)$ and $\gamma_\pm$ is the $m_0(\bar G)$-th Taylor coefficient of $g_\pm$ about $x=0$. Notice that 
\begin{equation}\label{gamma+-}
    g_{m_0(\bar G)}=\gamma_-+(-1)^{m_0(\bar G)}\gamma_+\ne 0,
\end{equation}
where $g_{m_0(\bar G)}$ denotes the $m_0(\bar G)$-th Taylor coefficient of $\bar G$ about $x=0$ (recall its definition after Equation \eqref{FunctionMultip}). A direct consequence of \eqref{gamma+-} is 
\begin{equation}\label{eq-gamma+-further}
(-1)^{m_0(\bar G)+1}g_{m_0(\bar G)} = (-1)^{m_0(\bar G)+1}\gamma_{-} - \gamma_{+}. 
\end{equation}

The integral $I$ can be written as
 \begin{align}\label{intasymp2}
  I(y)=&-4\int_{\sqrt y}^{0}\frac{x^2}{T_+(x)}dx+4\int_{-\sqrt y}^{0}\frac{x^2}{T_-(x)}dx\nonumber\\
  \ & +\frac{4(-1)^{m_0(\bar G)+1}g_{m_0(\bar G)}}{\left(g_+'(0)\right)^2(m_0(\bar G)+1)}y^\frac{m_0(\bar G)+1}{2}(1+o(1))\nonumber\\
  =& \frac{4(-1)^{m_0(\bar G)+1}g_{m_0(\bar G)}}{\left(g_+'(0)\right)^2(m_0(\bar G)+1)}y^\frac{m_0(\bar G)+1}{2}(1+o(1)),
 \end{align} where $o(1)$ tends to zero as $y\to 0$. In the first step in \eqref{intasymp2} we use \eqref{PWSSDINormal1}, \eqref{integrand2}, \eqref{gamma+-} and the fact that $g'_-(0)= g'_+(0)$ (because $m_0(\bar G)>1$). We also used the relation \eqref{eq-gamma+-further}. Moreover, in the second step we use the fact that
 $$\int_{\sqrt y}^{0}\frac{x^2}{T_+(x)}dx=\int_{-\sqrt y}^{0}\frac{x^2}{T_-(x)}dx.$$ 

This follows directly from $T_-(x)=-T_+(-x)$ (observe that the Taylor polynomial $T_-(x)+T_+(-x)$ of degree $m_0(\bar G)-1$ of $\bar G$, defined in \eqref{FunctionMultip}, at $x=0$ is identically zero). A simple consequence of \eqref{intasymp2} is that $I>0$ if, and only if, $(-1)^{m_0(\bar G)+1}g_{m_0(\bar G)}>0$.

Finally, \eqref{intasymp2} and The Mean Value Theorem for Integrals applied to \eqref{intEE1} yield
\begin{equation}\label{eqsim}
 y_l-y_{l+1}\simeq y_l^\frac{m_0(\bar G)+1}{2}, l\to\infty,
\end{equation}
where the notation $\simeq$ was introduced in Section \ref{Minkowski-def-separated}. Note that $\nu:=\frac{m_0(\bar G)+1}{2}>1$. It follows from Equation \eqref{eqsim} and \cite[Theorem 1 and Remark 1]{EZZ} that the orbit $U$ is Minkowski nondegenerate and
$$\dim_BU=1-\frac{1}{\nu}=\frac{m_0(\bar G)-1}{m_0(\bar G)+1},$$
and these results are independent of the choice of $y_0$. We have proved Statement 1 of Theorem \ref{thm2} for $1< m_0(\bar G)<\infty$.\\
\\
(c) $m_0(\bar G)=\infty$. Using similar steps to the ones used in case (b), it is not difficult to see that for every $\widetilde \nu>0$ then $I(y)=O(y^{\widetilde \nu})$, when $y\to 0$. This and \eqref{intEE1} imply that for every $\widetilde \nu>0$ then $y_l-y_{l+1}=O(y_l^{\widetilde \nu})$, when $l\to\infty$. Following \cite[Theorem 6]{EZZ} we have $\dim_BU=1$, and once again the Minkowski dimension does not depend on $y_0$. This completes the proof of Statement 2 of Theorem \ref{thm2}.

\subsection{Proof of Theorem \ref{thm3}}\label{proofThm3}

Suppose that $\Gamma_{\hat y}$ is a balanced canard cycle with the associated slow relation function $H$ defined in Section \ref{sectionbounded}. We prove Theorem \ref{thm3} for $I>0$ in the open interval $ ]\hat y,\hat y+y^*[$. In this case, the orbit $U$ of $y_0\in ]\hat y,\hat y+y^*[$ by $H$ converges monotonically to $\hat y$ as $l\to \infty$. We have $\hat y < H(y) < y$, for each $y\in ]\hat y,\hat y+y^*[$. The case $I<0$ on $ ]\hat y,\hat y+y^*[$ can be treated in a similar way. 

We have
\begin{align*}
     I(y)&=I_+(y)-I_-(y)\\
     &=I_+(y)-I_-(H(y))+I_-(H(y))-I_-(y)\\
     &=\int_{\alpha(y)}^{\alpha(H(y))}\frac{F'_-(x)^2}{G_-(x)}dx.
     \end{align*}
     
In the last step, we use $I_-(H(y))=I_+(y)$ and \eqref{PWSSDI}. From \eqref{assum2} and The Fundamental Theorem of Calculus, it follows that $\alpha'(y)<0$ and 
$$I(y)=\zeta(y)(y-H(y)),$$
where $y$ is kept near $\hat y$ and $\zeta$ is a positive smooth function.
This implies that $y=\hat y$ is a zero of multiplicity $m$ of $I(y)$ if, and only if, $y=\hat y$ is a zero of multiplicity $m$ of $y-H(y)$.

We know that the following result holds (see \cite{EZZ,MRZ}).
\begin{theorem}\label{th-EZZ}
 Let $\tilde F$ be a smooth function on $[0,\tilde y[$, $\tilde F(0)=0$ and $0<\tilde F(y)<y$ for each $y\in ]0,\tilde y[$. Define $\tilde H=id-\tilde F$ and let $U$ be the orbit of $y_0\in ]0,\tilde y[$ by $\tilde H$. Then $\dim_BU$ is independent of the initial point $y_0$ and, for $1\le m\le \infty$, the following bijective correspondence holds:
 \begin{equation}
     m=\frac{1}{1-\dim_BU}\nonumber
 \end{equation}
 where $m$ is the multiplicity of the zero $y=0$ of $\tilde F$. If $m=\infty$, then $\dim_BU=1$.
\end{theorem}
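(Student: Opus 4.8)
The plan is to reduce the statement to an asymptotic analysis of the scalar recurrence $y_{l+1}=\tilde H(y_l)$ and then to read off the box dimension from the rate at which the orbit piles up at $0$. First I would note that the hypotheses $0<\tilde F(y)<y$ force $0<\tilde H(y)<y$ on $]0,\tilde y[$, so that the orbit $U=\{y_l\}$ is strictly decreasing and converges monotonically to the fixed point $y=0$. The crucial elementary identity is $y_l-y_{l+1}=\tilde F(y_l)$. Writing the multiplicity condition explicitly, when $1\le m<\infty$ we have $\tilde F(y)=i_m y^m(1+o(1))$ as $y\to 0$ with $i_m>0$, hence $y_l-y_{l+1}\simeq y_l^m$ as $l\to\infty$, where $\simeq$ is the comparison introduced in Section \ref{Minkowski-def-separated}. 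This is exactly the setting treated by the results of \cite{EZZ} that are already invoked in the proof of Theorem \ref{thm2}, so the structural part of the work is to apply them with exponent $\nu=m$.

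Second, I would separate the three regimes as in the proof of Theorem \ref{thm2}. If $m=1$, then $y_l-y_{l+1}\simeq y_l$; since the gap stays below $y_l$ one gets $y_{l+1}\le(1-\rho)y_l$ for some $\rho\in(0,1]$, so the orbit decays at least exponentially fast, and \cite[Lemma 1]{EZZ} yields $\dim_B U=0$, consistent with $m=1/(1-\dim_B U)$. If $1<m<\infty$, I would first solve the recurrence asymptotically: setting $\phi(y)=y^{1-m}$ and using $y_l-y_{l+1}\simeq y_l^m$ gives $\phi(y_{l+1})-\phi(y_l)\to(m-1)i_m>0$, whence $y_l^{1-m}\sim(m-1)i_m\,l$ and therefore $y_l\simeq l^{-1/(m-1)}$. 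Then \cite[Theorem 1 and Remark 1]{EZZ}, applied with $\nu=m>1$, show that $U$ is Minkowski nondegenerate with $\dim_B U=1-1/m=(m-1)/m$, which rearranges into the claimed bijection $m=1/(1-\dim_B U)$. Finally, if $m=\infty$ the flatness of $\tilde F$ gives $y_l-y_{l+1}=O(y_l^k)$ for every $k$, and \cite[Theorem 6]{EZZ} yields $\dim_B U=1$.

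The heart of the argument — and the step I expect to be the main obstacle if one does not simply cite \cite{EZZ} — is the passage from the tail asymptotics $y_l\simeq l^{-1/(m-1)}$ to the value of the box dimension. The standard way to carry this out is a two-regime estimate of the Lebesgue measure of the $\delta$-neighborhood $U_\delta\subset\mathbb{R}$. One fixes $\delta>0$ and locates the critical index $l_\delta$ at which consecutive gaps $y_l-y_{l+1}\simeq l^{-m/(m-1)}$ become comparable to $\delta$, namely $l_\delta\simeq\delta^{-(m-1)/m}$. For $l\le l_\delta$ the points are separated by more than $2\delta$, so their $\delta$-balls are disjoint and contribute $\simeq l_\delta\cdot\delta$; for $l>l_\delta$ the balls overlap and merge into a single interval of length $\simeq y_{l_\delta}\simeq l_\delta^{-1/(m-1)}$. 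Both contributions are of order $\delta^{1/m}$, so $|U_\delta|\simeq\delta^{1/m}$; comparing with $|U_\delta|\simeq\delta^{1-s}$ (here $N=1$) gives $s=(m-1)/m$, and the two-sided bounds furnish Minkowski nondegeneracy. Independence of $y_0$ then follows because the leading asymptotics $y_l^{1-m}\sim(m-1)i_m\,l$ are governed by $m$ and $i_m$ alone, the initial point affecting only lower-order terms, so that orbits from different starting points share the same critical-index balance and hence the same dimension.
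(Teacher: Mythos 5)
Your proposal is correct, but it is worth noting how it relates to the paper: the paper does not prove this statement at all. Theorem \ref{th-EZZ} is quoted as a known result with the pointer ``see \cite{EZZ,MRZ}'', and the paper's actual work lies in reducing Theorem \ref{thm3} to it via the identity $I(y)=\zeta(y)(y-H(y))$. What you have done is reconstruct the proof of the cited black box. Your reduction --- the identity $y_l-y_{l+1}=\tilde F(y_l)$, hence $y_l-y_{l+1}\simeq y_l^m$, followed by \cite[Lemma 1]{EZZ} for $m=1$, \cite[Theorem 1 and Remark 1]{EZZ} for $1<m<\infty$, and \cite[Theorem 6]{EZZ} for $m=\infty$ --- is exactly the pattern the authors themselves use in Section \ref{proofThm1} to prove Theorem \ref{thm2}, so it is fully consistent with the paper's methodology. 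Beyond that, your final paragraph (the Ces\`aro-type step $y_l^{1-m}\sim(m-1)i_m\,l$, the critical index $l_\delta\simeq\delta^{-(m-1)/m}$, and the two-regime estimate giving $|U_\delta|\simeq\delta^{1/m}$ from the disjoint-ball part and the merged tail, each of order $\delta^{1/m}$) is a correct sketch of the internal proof of \cite[Theorem 1]{EZZ} itself, including Minkowski nondegeneracy and independence of $y_0$. The paper's citation buys brevity; your version buys a nearly self-contained treatment and makes explicit why the dimension formula, the nondegeneracy for $1<m<\infty$, and the insensitivity to the initial point all hold. The only points to polish are cosmetic: in the $m=1$ case the contraction factor should be taken as $\rho<1$ strictly (positivity of the orbit forbids $\rho=1$), and the disjointness of the $\delta$-balls for $l\le l_\delta$ holds up to a constant factor in the index (using that the gaps $\tilde F(y_l)$ are eventually monotone, since $\tilde F'>0$ near $0$ for finite $m$), which is all the $\simeq$-comparison requires.
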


Theorem \ref{thm3} follows from Theorem \ref{th-EZZ} with $\tilde y=y^*$, $\tilde F(y)=y+\hat y-H(y+\hat y)$ and $\tilde H(y)=H(y+\hat y)-\hat y$. When $1\le m_{\hat y}(I) <\infty$, it is clear that $I>0$ if, and only if  $i_{m_{\hat y}(I)}>0$. Moreover, for $1< m_{\hat y}(I)<\infty$, the orbit $U$ is Minkowski nondegenerate (see e.g. \cite[Theorem 1]{EZZ}).

\subsection{Proof of Theorem \ref{thm4}}\label{proofThm4}
In this section we focus on system \eqref{model-Lienard1} and prove Theorem \ref{thm4}. In Section \ref{m<-section-infty}, we study the Poincar\'{e}--Lyapunov compactification of \eqref{model-Lienard1} and detect the unbounded canard cycle. Section \ref{subsubsec-analysis} is devoted to a fractal classification of \eqref{model-Lienard1} at infinity in the positive $y$-direction. In Section \ref{complete-proof}, we complete the proof of Theorem \ref{thm4} by using the invariance of the slow divergence integral under changes of coordinates and changes of time.

\subsubsection{Poincar\'{e}--Lyapunov compactification}\label{m<-section-infty}

This section is devoted to study the dynamics of \eqref{model-Lienard1} near infinity on the Poincar\'{e}--Lyapunov disc of degree $(1,n+1)$.

In the positive $x$-direction we use the transformation 
\begin{equation}
x=\frac{1}{r}, \ y=\frac{\bar y}{r^{n+1}}, \nonumber
\end{equation}
with $r>0$ small and $\bar y$ kept in a large compact set.  In the new coordinates $(r,\bar y)$, system \eqref{model-Lienard1} becomes (after multiplication by $r^{n}$)
\begin{equation}
\label{model-infinity m^+< pos x}
    \begin{vf}
        \dot{r} & = & -r\left(\bar y-1-\sum\limits_{k=2}^{n}b_k^+ r^{n+1-k} \right),   \\
        \dot{\bar y} & = &-\epsilon a_{1}^{+} r^{2n} -(n+1)\bar y \left(\bar y-1-\sum\limits_{k=2}^{n}b_k^+ r^{n+1-k} \right).
    \end{vf}
\end{equation}

 Suppose that $\epsilon=0$. On the line $\{r=0\}$ system \eqref{model-infinity m^+< pos x} has two singular points: $\bar y=0$ and $\bar y=1$. The eigenvalues of the linear part at $\bar y=0$ are  given by $(1,n+1)$ and the eigenvalues of $\bar y=1$ are given by $(0,-(n+1))$. This implies that $\bar y=0$ is a hyperbolic repelling node and $\bar y=1$ is a semi-hyperbolic singularity with the $\bar y$-axis as the stable manifold and the curve of singular points 
$\bar y=1+\sum_{k=2}^{n}b_k^+ r^{n+1-k}$
as center manifold. 

It is not difficult to see, using the invariance under the flow and asymptotic expansions in $\epsilon$, that center manifolds of \eqref{model-infinity m^+< pos x}$+0\frac{\partial}{\partial \epsilon}$ at $(r,\bar y, \epsilon)=(0,1,0)$ are given by 
\[\bar y=1+\sum_{k=2}^{n} b_k^+ r^{n+1-k}-r^{2n}\left(\frac{a_{1}^{+}}{n+1}+O(r)\right)\epsilon+O(\epsilon^2).\]

If we substitute this for $\bar y$ in the first component of \eqref{model-infinity m^+< pos x}, divide out $\epsilon$ and let $\epsilon\to 0$, we obtain the slow dynamics 
\[r'=r^{2n+1}\left(\frac{a_{1}^{+}}{n+1} +O(r)\right).\]

Since $a_{1}^{+}>0$, the slow dynamics points away from the singularity $r=0$.

In the negative $x$-direction we use the transformation
\begin{equation}
x=\frac{-1}{r}, \ y=\frac{\bar y}{r^{n+1}},\nonumber
\end{equation}
and system \eqref{model-Lienard1} changes (after multiplication by $r^{n}$) into
\begin{equation}
\label{model-infinity m^-< neg x}
    \begin{vf}
        \dot{r} &=& r\left(\bar y - 1 -\sum\limits_{k=2}^{n}b_k^-(-1)^k r^{n+1-k} \right),   \\
        \dot{\bar y} &=&\epsilon a_{1}^{-} r^{2n} +(n+1)\bar y \left(\bar y - 1 - \sum\limits_{k=2}^{n}b_k^- (-1)^k r^{n+1-k} \right) .
    \end{vf}
\end{equation} 

 When $\epsilon=0$, the line $\{r=0\}$ contains two singularities of \eqref{model-infinity m^-< neg x}: $\bar y = 0$ and $\bar y = 1$. The eigenvalues of the linear part at $\bar y=0$ (resp. $\bar y = 1$) are $(-1, -(n+1))$ (resp. $(0,n+1)$). Hence, $\bar y=0$ is a hyperbolic attracting node and $\bar y = 1$ is a semi-hyperbolic singularity with the $\bar y$-axis as the unstable manifold. The curve of singularities is given by $\bar y = 1 + \sum_{k=2}^{n}b_k^-(-1)^k r^{n+1-k}$.
 The slow dynamics along the curve of singularities is given by 
\[r'=r^{2n+1}\left(\frac{-a_{1}^{-}}{n+1} +O(r)\right),\]
and it points towards
the origin $r=0$ because $a_{1}^{-}>0$.

In the positive and negative $y$-direction, there are no extra singularities. After putting all the information together, we get the phase portrait near infinity of \eqref{model-Lienard1} for $\epsilon=0$, including direction of the slow dynamics (see Figure \ref{fig_PWS}). Now, it is clear that the unbounded canard cycle consists of the curve of singularities of \eqref{model-Lienard1}, denoted by $S$, and the regular orbit connecting the two semi-hyperbolic singularities at infinity (these two points are the end points of $S$). 

\begin{figure}[htb]
	\begin{center}
		\includegraphics[scale=0.5]{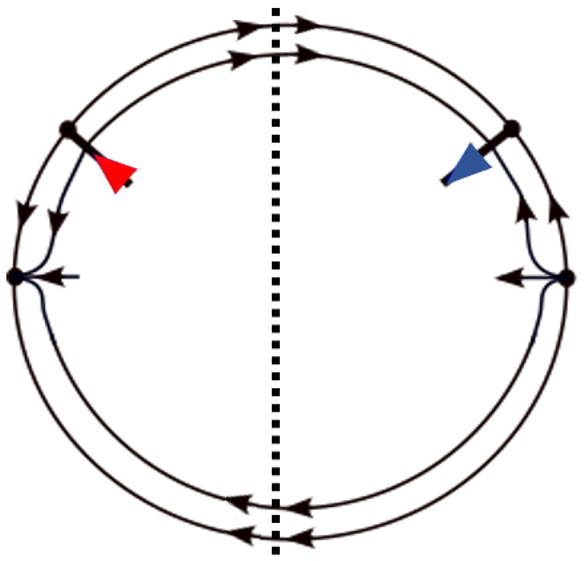}
        {\footnotesize
        \put(-87,-6){$x=0$}
             }
         \end{center}
 \caption{The phase portrait near infinity of \eqref{model-Lienard1}, with $\epsilon=0$. We have a crossing near the switching line $x=0$.}\label{fig_PWS}
\end{figure}

\subsubsection{Fractal Analysis Near Infinity}\label{subsubsec-analysis}

Since the attracting branch and the repelling branch of the curve of singularities
are visible in the positive $y$-direction (see Figure \ref{fig_PWS}), it is natural to present the fractal analysis in the positive $y$-direction. We have
\begin{equation}
x=\frac{\bar x}{r}, \ y=\frac{1}{r^{n+1}},\nonumber
\end{equation}
and system \eqref{model-Lienard1} becomes (after multiplication by $r^{n}$)

\begin{equation} \label{PL_negative}
    \begin{cases}
        \dot{r} =\frac{\epsilon a_{1}^{-}}{n+1} r^{2n+1}\bar{x},\\
        \dot{\bar{x}}  = 1-\left((-1)^{n+1}\bar{x}^{n+1}+\sum\limits_{k=2}^{n} b_k^- r^{n+1-k} \bar{x}^k \right) +\frac{\epsilon a_{1}^{-}}{n+1} r^{2n} \bar{x}^2,
    \end{cases} \ \ \bar x\le 0,
\end{equation}

\begin{equation} \label{PL_positive}
    \begin{cases}
        \dot{r} = \frac{\epsilon a_{1}^{+}}{n+1}r^{2n+1}\bar{x}, \\
        \dot{\bar{x}} = 1-\left(\bar{x}^{n+1}+ \sum\limits_{k=2}^{n} b_k^+r^{n+1-k}\bar{x}^k \right)+\frac{\epsilon a_{1}^{+}}{n+1} r^{2n}\bar{x}^2,
    \end{cases} \ \ \bar x\ge 0.
\end{equation}

When $\epsilon=0$, system (\ref{PL_negative}) has a normally repelling curve of singularities $\Bar{x}=\Phi_{-}(r)$ satisfying $\Phi_{-}(0)=-1$ and  
\begin{equation}\label{phidef-}
     1-(-1)^{n+1}\Phi_{-}(r)^{n+1}-\sum_{k=2}^{n} b_k^- r^{n+1-k}\Phi_{-}(r)^k=0,
\end{equation}
and (\ref{PL_positive}) has a normally attracting curve of singularities $\Bar{x}=\Phi_{+}(r)$ satisfying $\Phi_{+}(0)=1$ and
\begin{equation}\label{phidef+}
    1-\Phi_{+}(r)^{n+1}-\sum_{k=2}^{n} b_k^+r^{n+1-k}\Phi_{+}(r)^k=0.
\end{equation}

Recall from Equation \eqref{Function_f_n} that $f_k=b_k^++(-1)^{k+1}b_k^-$, for $k=2,\dots,n$. The following lemma gives a useful connection between $\Phi_{-}$ and $\Phi_{+}$.
\begin{lemma} \label{lemma-Phi}
The functions $\Phi_{-}(r)$ and $\Phi_{+}(r)$ defined in \eqref{phidef-} (resp. \eqref{phidef+}) satisfy
\begin{equation}\label{expansionPhi_PWS}
    \Phi_{+}(r)=-\Phi_{-}(r) 
    -\displaystyle\frac{1}{n+1} \sum_{k=2}^{n} f_k r^{n+1-k} \left(1+ O(r) \right), \nonumber
\end{equation}
\end{lemma}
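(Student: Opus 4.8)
The plan is to derive the expansion by exploiting the symmetry between the defining equations \eqref{phidef-} and \eqref{phidef+}. First I would introduce an auxiliary function, say $\Psi(r) := -\Phi_{-}(r)$, and rewrite the repelling-branch equation \eqref{phidef-} in terms of $\Psi$. Since $\Phi_{-}(r)^{n+1} = (-1)^{n+1}\Psi(r)^{n+1}$ and $\Phi_{-}(r)^{k} = (-1)^{k}\Psi(r)^{k}$, equation \eqref{phidef-} becomes
\begin{equation*}
1 - \Psi(r)^{n+1} - \sum_{k=2}^{n} (-1)^{k} b_{k}^{-} r^{n+1-k}\Psi(r)^{k} = 0,
\end{equation*}
with $\Psi(0) = 1$. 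Comparing this with the attracting-branch equation \eqref{phidef+} satisfied by $\Phi_{+}$ (also with value $1$ at $r=0$), I observe that the two functions $\Psi$ and $\Phi_{+}$ satisfy the same type of equation but with coefficients $(-1)^{k}b_{k}^{-}$ versus $b_{k}^{+}$. Their difference is therefore governed precisely by the quantities $f_{k} = b_{k}^{+} + (-1)^{k+1}b_{k}^{-} = b_{k}^{+} - (-1)^{k}b_{k}^{-}$ appearing in \eqref{Function_f_n}.

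Next I would set $D(r) := \Phi_{+}(r) - \Psi(r) = \Phi_{+}(r) + \Phi_{-}(r)$, which is the quantity I want to estimate, and note $D(0) = 0$. Subtracting the two defining equations and using the factorization $\Phi_{+}^{n+1} - \Psi^{n+1} = D \cdot \big(\sum_{j} \Phi_{+}^{j}\Psi^{n-j}\big)$, the leading contribution from the pure power terms is $(n+1)D(r)(1 + O(r))$, since both $\Phi_{+}$ and $\Psi$ tend to $1$ as $r \to 0$. On the other side, the mismatched sum contributes $\sum_{k=2}^{n}\big(b_{k}^{+} - (-1)^{k}b_{k}^{-}\big) r^{n+1-k}\Phi_{+}(r)^{k}(1+o(1))$, whose leading behavior at $r=0$ is $\sum_{k=2}^{n} f_{k} r^{n+1-k}(1 + O(r))$ because $\Phi_{+}(r)^{k} = 1 + O(r)$. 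Solving the resulting relation for $D(r)$ yields
\begin{equation*}
D(r) = \frac{1}{n+1}\sum_{k=2}^{n} f_{k} r^{n+1-k}\big(1 + O(r)\big),
\end{equation*}
which upon substituting $D = \Phi_{+} + \Phi_{-}$ and moving $\Phi_{-}$ to the right gives exactly the claimed formula in Lemma \ref{lemma-Phi}.

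To make this rigorous I would first establish by the Implicit Function Theorem that $\Phi_{\pm}$ are $C^{\infty}$ in $r$ near $r=0$ (the relevant partial derivative of the defining relation with respect to $\bar x$ is nonzero at $(\,r,\bar x) = (0,\pm 1)$, namely $\mp(n+1) \neq 0$), so that all the Taylor expansions and $O(r)$ remainders are justified. The main obstacle I expect is bookkeeping the error terms carefully: one must confirm that the cross terms $\sum_{j}\Phi_{+}^{j}\Psi^{n-j}$ genuinely equal $(n+1)(1 + O(r))$ and that the $\Phi_{+}^{k}$ factors in the mismatched sum can be absorbed into the $(1 + O(r))$ without altering the leading coefficient $f_{k}$. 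This requires tracking that $D(r)$ is itself $O(r^{n+1-k_{0}})$ (driven by the lowest-order surviving $f_{k}$), so that the product $D(r)\cdot O(r)$ is genuinely of higher order than the displayed terms; otherwise the claimed asymptotic structure could be contaminated. Once the orders are matched consistently, the identity follows directly from the algebraic factorization and the normalizations $\Phi_{\pm}(0) = \pm 1$.
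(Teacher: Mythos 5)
Your overall strategy is sound, and it is in fact the quantitative elaboration of the paper's own (one-line) argument: the paper proves the lemma by observing that if $f_2=\dots=f_n=0$ then $-\Phi_{-}$ satisfies exactly the defining equation \eqref{phidef+} of $\Phi_{+}$, hence $\Phi_{+}=-\Phi_{-}$; your substitution $\Psi=-\Phi_{-}$, the identification of the coefficient mismatch $(-1)^kb_k^-$ versus $b_k^+$ with the $f_k$ of \eqref{Function_f_n}, and the subtraction of the two algebraic identities is precisely the way to turn that symmetry into the stated expansion. Your appeal to the Implicit Function Theorem for smoothness of $\Phi_{\pm}$ (with the nonvanishing partial derivatives $\mp(n+1)$ at $(r,\bar x)=(0,\pm1)$) is also the right justification for the $O(r)$ remainders.

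However, as written your proof contains a sign error, and your stated conclusion contradicts the lemma. Write $c_k:=(-1)^kb_k^-$, so that $f_k=b_k^+-c_k$, and subtract the identity for $\Psi$ from the identity for $\Phi_{+}$. Decomposing $b_k^+\Phi_{+}^k-c_k\Psi^k=f_k\Phi_{+}^k+c_k\bigl(\Phi_{+}^k-\Psi^k\bigr)$ and setting $D=\Phi_{+}-\Psi$, one obtains
\begin{equation*}
-(n+1)D(r)\bigl(1+O(r)\bigr)-\sum_{k=2}^{n}f_k r^{n+1-k}\Phi_{+}(r)^k-D(r)\,O(r)=0,
\end{equation*}
that is, after absorbing the $D\,O(r)$ contribution into the first term,
\begin{equation*}
(n+1)D(r)\bigl(1+O(r)\bigr)=-\sum_{k=2}^{n}f_k r^{n+1-k}\bigl(1+O(r)\bigr),
\end{equation*}
so the difference of pure powers equals \emph{minus} the mismatched sum. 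Solving this relation (which is linear in $D$ with coefficient nonvanishing near $r=0$, so no a priori estimate on the size of $D$ is needed, contrary to your bookkeeping worry) gives
\begin{equation*}
D(r)=\Phi_{+}(r)+\Phi_{-}(r)=-\frac{1}{n+1}\sum_{k=2}^{n}f_k r^{n+1-k}\bigl(1+O(r)\bigr),
\end{equation*}
which is the lemma. Your proposal instead asserts $D(r)=+\frac{1}{n+1}\sum_{k}f_k r^{n+1-k}(1+O(r))$ and claims this yields the claimed formula; it does not — it yields the formula with the opposite sign on the correction term. This sign is not cosmetic: it propagates through \eqref{pomocno-integrand} into the hypotheses $f_{k_0}(1+k_0-n)>0$ versus $f_{k_0}(1+k_0-n)<0$ of Theorem \ref{thm4}, where the flipped sign would interchange the roles of $H$ and $H^{-1}$. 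Once the minus sign is restored, your argument is complete and rigorous.
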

\begin{proof}
The lemma can be easily proved using $\Phi_{+}(r)=-\Phi_{-}(r)$ when $f_2=\dots=f_n=0$ (see \eqref{phidef-} and \eqref{phidef+}).   
\end{proof}

If we substitute $\Phi_{-}(r)$ (resp. $\Phi_{+}(r)$) for $\bar x$ in the $r$-component of (\ref{PL_negative}) (resp. (\ref{PL_positive})) and divide out $\epsilon$, then we obtain the slow dynamics along $\bar{x}=\Phi_{\pm}(r)$:
\begin{equation}\label{slow-dyn-infty}
    r'=\frac{a_{1}^{\pm}}{n+1}r^{2n+1}\Phi_{\pm}(r).
\end{equation}

Let $\Tilde{r}>0$ be small and fixed. For $r \in ]0,\Tilde{r}[$, we define the slow divergence integral of (\ref{PL_negative}) along the portion $[r,\Tilde r]$ of $\Bar{x}=\Phi_{-}(r)$
\begin{equation} \label{J_-}
    J_{-}(r)=-(n+1)\int_{r}^{\Tilde{r}}\frac{(n+1)(-1)^{n+1}\Phi_{-}(s)^{n}+\sum\limits_{k=2}^{n} kb_k^{-}s^{n+1-k}\Phi_{-}(s)^{k-1}}{a_{1}^{-} s^{2n+1}\Phi_{-}(s)} ds < 0,
\end{equation}
and the slow divergence integral of (\ref{PL_positive}) along the portion $[r,\Tilde r]$ of $\Bar{x}=\Phi_{+}(r)$
\begin{equation} \label{J_+}
    J_{+}(r)=-(n+1)\int_{r}^{\Tilde{r}}\frac{(n+1)\Phi_{+}(s)^{n}+\sum\limits_{k=2}^{n} kb_k^{+}s^{n+1-k}\Phi_{+}(s)^{k-1}}{a_{1}^{+}s^{2n+1}\Phi_{+}(s)} ds < 0.
\end{equation}

Observe that $J_+$ is the integral of
the divergence of (\ref{PL_positive}) for $\epsilon=0$, computed in singular points $(s,\Phi_{+}(s))$, where the variable of integration is the time variable of the slow dynamics \eqref{slow-dyn-infty}. A similar remark holds for the integral $J_-$ (but it is computed in backward time). From \eqref{J_-} and \eqref{J_+} it follows that $J_{\pm}(r)\to -\infty$ as $r \rightarrow 0$. 

Let $\Tilde{J}\in\mathbb R$ be arbitrary but fixed. For $r_0 \in ]0,\Tilde{r}[$, we suppose that the sequence $(r_l)_{l \in \N}$, defined by
\begin{equation*}
    J_{+}(r_l)-J_{-}(r_{l+1})=\Tilde{J}, \ l\ge 0,
\end{equation*}
tends monotonically to $0$ as $l \rightarrow \infty$. The case where $(r_l)_{l \in \N}$, tending to $0$ as $l \rightarrow \infty$, is defined by $J_{+}(r_{l+1})-J_{-}(r_{l})=\Tilde{J}$ can be treated in a similar way.
\begin{remark}
    In Section \ref{complete-proof} the constant $\Tilde J$ will be equal to the slow divergence integral $-I\left(\frac{1}{\tilde r^{n+1}}\right)$, with $I$ defined in \eqref{SDI-total}.
\end{remark}
We can write
\begin{equation} \label{J-equation}
    J_{+}(r_l)-J_{-}(r_l)=\Tilde{J}+J_{-}(r_{l+1})-J_{-}(r_l).
\end{equation}

Let us first study $J_{+}(r_l)-J_{-}(r_l)$. For simplicity sake, we write $\Phi_{\pm}=\Phi_{\pm}(s)$. From Lemma \ref{lemma-Phi} and the relation $b_{k}^{+} = f_{k} + (-1)^{k}b_{k}^{-}$, one can check that  
\begin{align}\label{pomocno-integrand}
    &\frac{(n+1)\Phi_{+}^{n}+\sum\limits_{k=2}^{n} kb_k^{+} s^{n+1-k}\Phi_{+}^{k-1}}{a_{1}^{+} \Phi_{+}}\nonumber\\
    &=\frac{(n+1)(-\Phi_{-})^{n}+\sum\limits_{k=2}^{n} kb_k^{+} s^{n+1-k}(-\Phi_{-})^{k-1}}{-a_{1}^{+} \Phi_{-}} - \sum_{k=2}^{n} f_k s^{n+1-k} \left(\frac{n-1}{a_{1}^{+}}+{O}(s) \right)\nonumber\\
    &=\frac{-(n+1)(-1)^{n+1}\Phi_{-}^{n} \! - \! \sum\limits_{k=2}^{n} kb_k^{-} s^{n+1-k}\Phi_{-}^{k-1} \! + \! \sum\limits_{k=2}^{n} k f_k s^{n+1-k}(-\Phi_{-})^{k-1} }{-a_{1}^{-}\Phi_{-} +(a_{1}^{-}-a_{1}^{+})\Phi_{-}}\nonumber\\
    & \ \ \ \ - \sum_{k=2}^{n} f_k s^{n+1-k} \left(\frac{n-1}{a_{1}^{+}}+{O}(s) \right) \nonumber\\
    &=\frac{(n+1)(-1)^{n+1}\Phi_{-}^{n}+\sum\limits_{k=2}^{n} kb_k^{-}s^{n+1-k}\Phi_{-}^{k-1}}{a_{1}^{-}\Phi_{-}} \nonumber \\
    & \ \ \ - \sum_{k=2}^{n} f_k s^{n+1-k} \left(\frac{n-1-k}{a_{1}^{+}}+{O}(s) \right)
    +(a_{1}^{-}-a_{1}^{+})\left( \frac{n+1}{a_{1}^{-}a_{1}^{+}}+{O}(s) \right).
\end{align}

Using the integrals (\ref{J_-}), (\ref{J_+}) and considering the integrand \eqref{pomocno-integrand}, it follows that
\begin{align}\label{integral-pomocno}
    J_{+}(r_l)-J_{-}(r_l)&= 
    (n+1)\int_{r_l}^{\Tilde{r}}\frac{1}{s^{2n+1}} \sum_{k=2}^{n} f_k s^{n+1-k} \left(\frac{n-1-k}{a_{1}^{+}}+{O}(s) \right)ds\nonumber \\
    & \ \ \ -(n+1)\int_{r_l}^{\Tilde{r}}\frac{1}{s^{2n+1}}(a_{1}^{-}-a_{1}^{+})\left( \frac{n+1}{a_{1}^{-}a_{1}^{+}}+{O}(s) \right)ds\nonumber\\
    &=\sum_{k=2}^{n}f_k r_l^{1-n-k}\left(\frac{(n+1)(1+k-n)}{a_{1}^{+}(1-n-k)}+o(1)\right)\nonumber\\
    & \ \ \ +(a_{1}^{-}-a_{1}^{+})r_l^{-2n}\left( -\frac{(n+1)^2}{2na_{1}^{-}a_{1}^{+}}+o(1) \right)+\hat J,
\end{align}
where $o(1) \rightarrow 0$ as $r_l \rightarrow 0$ and $\hat{J}$ is a constant independent of $r_l$. 

Now, consider the term $J_{-}(r_{l+1})-J_{-}(r_l)$ in \eqref{J-equation}. Using \eqref{J_-}, we have
\begin{align}
    J_{-}(r_{l+1})-J_{-}(r_l)&=-\frac{(n+1)^2}{a_{1}^{-}}\int_{r_{l+1}}^{r_l} \frac{1}{s^{2n+1}} \left(1+{O}(s) \right) ds\nonumber\\
    &=-\frac{(n+1)^2}{a_{1}^{-}r_l^{2n}}\int_{\frac{r_{l+1}}{r_l}}^{1} \frac{1}{\tilde s^{2n+1}} \left(1+{O}(r_l\tilde s) \right) d\tilde s,
    \label{Integral_left_to_right}
\end{align}
where in the last step we use the change of variable $s=r_l\tilde s$. 

Now, we use \eqref{integral-pomocno} and \eqref{Integral_left_to_right} in the Equation \eqref{J-equation} and then we get

\begin{align}
    -\frac{(n+1)^2}{a_{1}^{-}}&\int_{\frac{r_{l+1}}{r_l}}^{1} \frac{1}{\tilde s^{2n+1}} \left(1+{O}(r_l\tilde s) \right) d\tilde s\nonumber\\
    &=\sum_{k=2}^{n}f_k r_l^{n+1-k}\left(\frac{(n+1)(1+k-n)}{a_{1}^{+}(1-n-k)}+o(1)\right)\nonumber\\
    & \ \ \ +(a_{1}^{-}-a_{1}^{+})\left( -\frac{(n+1)^2}{2na_{1}^{-}a_{1}^{+}}+o(1) \right)+r_l^{2n}(\hat J-\Tilde{J}).
    \label{Recursion-konacno}
\end{align}

We distinguish between two cases: $a_{1}^{-}\ne a_{1}^{+}$ and $a_{1}^{-}=a_{1}^{+}$. Recall that $a_1^\pm>0$.

\paragraph{Case $a_{1}^{-}\ne a_{1}^{+}$.} Since $(r_l)_{l \in \N}$ tends monotonically to $0$ as $l 
 \rightarrow \infty$, it can be easily seen that \eqref{Recursion-konacno} implies 
$$\lim_{l \rightarrow \infty}\frac{r_{l+1}}{r_l}=\left(\frac{a_{1}^{+}}{a_{1}^{-}}\right)^\frac{1}{2n} \in ]0,1[.$$

Since $a_{1}^{-}> a_{1}^{+}$, we conclude that $\dim_B(r_l)_{l \in \N}=0$ and the Minkowski dimension does not depend on the choice of the initial point $r_0 \in ]0,\Tilde{r}[$ (see  Section \ref{proofThm1}). We remark that when the sequence $(r_l)_{l \in \N}$, converging monotonically to $0$ as $l \rightarrow \infty$, is defined by $J_{+}(r_{l+1})-J_{-}(r_{l})=\Tilde{J}$, we have $a_{1}^{-}< a_{1}^{+}$. 

\paragraph{Case $a_{1}^{-}=a_{1}^{+}$.}Now the right-hand side of \eqref{Recursion-konacno} tends to 0 as $l\to \infty$, and we get
\begin{equation}\label{quotient=1}\lim_{l \rightarrow \infty}\frac{r_{l+1}}{r_l}=1.\end{equation}

Suppose that one of the coefficients $f_k$ is nonzero. Then $k_0$ is well-defined (see Section \ref{sectionunbounded}) and from \eqref{Recursion-konacno} it follows that 
\begin{align}
    \int_{\frac{r_{l+1}}{r_l}}^{1} \frac{1}{\tilde s^{2n+1}} \left(1+{O}(r_l\tilde s) \right) d\tilde s=f_{k_0} r_l^{n+1-k_0}\left(\frac{1+k_0-n}{(n+k_0-1)(n+1)}+o(1)\right),
    \label{Recursion-konacno-1}
\end{align}
where $o(1) \rightarrow 0$ as $r_l \rightarrow 0$.

 Notice that $$\kappa\le \frac{1}{\tilde s^{2n+1}} \left(1+{O}(r_l\tilde s) \right) \le \frac{1}{\kappa}\left(\frac{r_l}{r_{l+1}}\right)^{2n+1}, \ \ \forall\tilde s\in [\frac{r_{l+1}}{r_l},1],$$ for $\kappa>0$ small enough. This, \eqref{quotient=1} and \eqref{Recursion-konacno-1} imply  
\begin{equation} \label{finally_C}
    r_l-r_{l+1}\simeq r_l^{n+2-k_0}, \ l\to \infty,\nonumber 
\end{equation}
if $f_{k_0}(1+k_0-n)>0$. The notation $\simeq$ was introduced in Section \ref{Minkowski-def-separated}.
As $n+2-k_0>1$ (recall that $ k_0 \leq n$), we have that $(r_l)_{l\in\mathbb{N}}$ is Minkowski nondegenerate,
\begin{equation}\label{box-dim-inftyyy}
    \dim_B(r_l)_{l \in \N}=1-\frac{1}{n+2-k_0}=\frac{n+1-k_0}{n+2-k_0},
\end{equation}
and this is independent of the choice of $r_0 \in ]0,\Tilde{r}[$ (see Section \ref{proofThm1}).

When $(r_l)_{l \in \N}$, tending (monotonically) to $0$ as $l \rightarrow \infty$, is defined by
    $J_{+}(r_{l+1})-J_{-}(r_{l})=\Tilde{J}$, we assume $f_{k_0}(1+k_0-n)<0$.

\subsubsection{Completing the proof of Theorem \ref{thm4}} \label{complete-proof}
Using $x=\frac{\bar x}{r}, \ y=\frac{1}{r^{n+1}}$ we have the following relation between $F_\pm$, defined in Section \ref{sectionunbounded}, and $\Phi_{\pm}$ satisfying \eqref{phidef-} and \eqref{phidef+}:
\begin{equation}\label{y=F(x)}
\frac{1}{r^{n+1}}=F_\pm\left(\frac{\Phi_{\pm}(r)}{r}\right).
\end{equation}

The invariance of the slow divergence integral under changes of coordinates
and time reparameterizations implies that
\begin{equation}\label{invariance-int}
    J_\pm(r)=-\int_{\frac{\Phi_{\pm}(r)}{r}}^{\frac{\Phi_{\pm}(\tilde r)}{\tilde r}}\frac{F'_\pm(x)^2}{G_\pm(x)}dx, \ \forall r \in ]0,\Tilde{r}[,
\end{equation}
with $J_\pm$ defined in \eqref{J_-} and \eqref{J_+} and $G_\pm(x)=- a_{1}^{\pm}x$ (we use the change of variable $x=\frac{\Phi_{\pm}(s)}{s}$).

Assume that $I=I_+-I_-$ defined in \eqref{SDI-total} is negative on $]y^*,\infty[$, where $y^*>0$ is large enough, and let $y_0\in ]y^*,\infty[$. (We take $\tilde r$ such that $y^*=\frac{1}{\tilde r^{n+1}}$.) Then the orbit $U=\{y_0,y_1,y_2,\dots\}$ generated by $I_-(y_{l+1})=I_+(y_l)$, $l\ge 0$, tends monotonically to $+\infty$ (see Section \ref{sectionunbounded}). If we write $r_l:=\frac{1}{y_l^\frac{1}{n+1}}$ (see \eqref{Minkowski-def-inf}), then it is clear that $(r_l)_{l \in \N}$ tends monotonically to $0$, and
\begin{align*}
   I_+(y_l)-I_-(y_{l+1})&= I_+\left(\frac{1}{r_l^{n+1}}\right)-I_-\left(\frac{1}{r_{l+1}^{n+1}}\right)\\
   &=-\int_{\frac{\Phi_{+}(r_l)}{r_l}}^{0}\frac{F'_+(x)^2}{G_+(x)}dx+\int_{\frac{\Phi_{-}(r_{l+1})}{r_{l+1}}}^{0}\frac{F'_-(x)^2}{G_-(x)}dx\\
   &=J_+(r_l)-\int_{\frac{\Phi_{+}(\tilde r)}{\tilde r}}^{0}\frac{F'_+(x)^2}{G_+(x)}dx-J_-(r_{l+1})+\int_{\frac{\Phi_{-}(\tilde r)}{\tilde r}}^{0}\frac{F'_-(x)^2}{G_-(x)}dx\\
   &=J_+(r_l)-J_-(r_{l+1})+I\left(\frac{1}{\tilde r^{n+1}}\right)
\end{align*}
where we use \eqref{y=F(x)} and \eqref{invariance-int}. This implies that $(r_l)_{l \in \N}$ is generated by $J_{+}(r_l)-J_{-}(r_{l+1})=\Tilde{J}$, where $\Tilde J:=-I\left(\frac{1}{\tilde r^{n+1}}\right)$ and we can therefore use the results of Section \ref{subsubsec-analysis}. Statement 1 (resp. Statement 2) of Theorem \ref{thm4} follows from \eqref{Minkowski-def-inf} and the case $a_{1}^{-}\ne a_{1}^{+}$ (resp. $a_{1}^{-}=a_{1}^{+}$) in Section \ref{subsubsec-analysis}. (If $I$ is positive on $]y^*,\infty[$, then we use $I_-(y_{l})=I_+(y_{l+1})$ and $J_{+}(r_{l+1})-J_{-}(r_{l})=\Tilde{J}$.) This completes the proof of Theorem \ref{thm4}.

\section{Crossing limit cycles and Minkowski dimension}\label{section-applications}

In this section we consider the piecewise smooth system of Li\'{e}nard equations
\begin{equation}\label{eq-pws-lienard-general}
\left\{
\begin{array}{rcl}
  \dot{x} & = & y - F_{-}(x), \\
  \dot{y} & = & \epsilon^{2}(\epsilon\alpha_{-}+G_-(x)),
  \end{array}
\right. \  x\leq 0, \quad 
\left\{
\begin{array}{rcl}
  \dot{x} & = & y - F_{+}(x), \\
  \dot{y} & = & \epsilon^{2}(\epsilon\alpha_{+}+G_+(x)),
  \end{array}
\right. \ x\geq 0,
\end{equation}
where $0<\epsilon \ll 1$, $\alpha_\pm$ are regular parameters kept near $0$ and $C^\infty$-smooth functions $F_\pm$ and $G_\pm$ satisfy \eqref{assum1} (see Section \ref{sec-Motivation}). We define 
\begin{equation}\label{const-beta}\beta_\pm:=-\frac{2G_\pm'(0)}{F_\pm''(0)}>0.
\end{equation}

A natural question that arises is how to link the Minkowski dimension of orbits defined in Sections \ref{sectionHopf} to \ref{sectionunbounded} with the number of crossing limit cycles that \eqref{eq-pws-lienard-general} can have for $\epsilon > 0$. If the Minkowski dimension of orbits tending monotonically to $y=0$ is trivial or, equivalently, $\beta_-\ne\beta_+$ (see Theorem \ref{thm2} and note that $\beta_-\ne\beta_+$ if, and only if, $m_0(\bar G)=1$), then there are no limit cycles near the PWS Hopf point (for the precise statement see Proposition \ref{prop-no-LC} in Section \ref{sec-blow-up}). The condition $\beta_-\ne\beta_+$ means that, after blowing-up the vector field $\eqref{eq-pws-lienard-general} + 0\frac{\partial}{\partial \epsilon}$, the connection on the blow-up locus between the attracting branch $\{y = F_{+}(x)\}$ and the repelling branch $\{y = F_{-}(x)\}$ does not exist (see Figure \ref{fig-pws-lienard}).

In Section \ref{section-bounded-LC} we show that trivial Minkowski dimension of orbits tending monotonically to a balanced bounded canard cycle (see Section \ref{sectionbounded}) is not equivalent to $\beta_-\ne\beta_+$. Indeed, we find a system \eqref{eq-pws-lienard-general} undergoing a saddle-node bifurcation of limit cycles and a system without limit cycles in which, in both cases, the Minkowski dimension is trivial. Finally, in Section \ref{sec-LC-unbound} we provide examples of PWS classical Li\'{e}nard equations \eqref{eq-pws-lienard-general} such that the Minkowski dimension of the unbounded canard cycle $\Gamma_{\infty}$ is trivial.

It is convenient to set $\epsilon^2$ in \eqref{eq-pws-lienard-general} instead of $\epsilon$ when we introduce a family blow-up at the PWS Hopf point (see Section \ref{sec-blow-up}). 

\subsection{Limit cycles near the PWS Hopf point}\label{sec-blow-up}


Our goal is to study limit cycles of \eqref{eq-pws-lienard-general} in a small $\epsilon$-uniform neighborhood of the origin $(x,y)=(0,0)$ (i.e., the neighborhood does not shrink to the origin as $\epsilon\to 0$).
For this purpose, we start our analysis by performing a blow-up transformation $$(x,y,\epsilon) = (\rho\bar{x},\rho^{2}\bar{y},\rho\bar{\epsilon}),$$
with $(\bar{x},\bar{y},\bar{\epsilon})\in\mathbb S^2$, $\rho\ge 0$ and $\bar\epsilon\ge 0$. We work with different charts. We point out that only the phase directional chart $\{\bar y=1\}$ and the family chart $\{\bar \epsilon=1\}$ are relevant  for the study of limit cycles of \eqref{eq-pws-lienard-general}, produced by the PWS Hopf point or (bounded and unbounded) canard cycles (see Figure \ref{fig-pws-lienard}).

\subsubsection{Dynamics in the chart $\bar{y} = 1$}\label{sec-phase-direc-y}

Using the coordinate change $(x,y,\epsilon) = (\rho\bar{x},\rho^{2},\rho\bar{\epsilon})$, with small $\rho\ge 0$ and $\epsilon\ge 0$ and $\bar x$ kept in a large compact subset of $\mathbb R$, we obtain (after division by $\rho$) 
\begin{equation}\label{eq-blowup-y1}
\left\{
\begin{array}{rcl}
  \dot{\bar{x}} & = & 1-\frac{F_\pm''(0)}{2}\bar{x}^{2} +O(\rho\bar x^3)-\frac{1}{2}\bar x\bar\epsilon^2\left(\bar\epsilon\alpha_\pm+G_\pm'(0)\bar x+O(\rho\bar x^2)\right), \\
  \dot{\rho} & = &\frac{1}{2}\rho\bar\epsilon^2 \left(\bar\epsilon\alpha_\pm+G_\pm'(0)\bar x+O(\rho\bar x^2)\right), \\
  \dot{\bar{\epsilon}} & = & -\frac{1}{2}\bar\epsilon^3\left(\bar\epsilon\alpha_\pm+G_\pm'(0)\bar x+O(\rho\bar x^2)\right).
  \end{array}
\right.    
\end{equation}

System \eqref{eq-blowup-y1} with $-$ (resp. $+$) is defined on $\bar x\le 0$ (resp. $\bar x\ge 0$). 
 Let us briefly describe the dynamics of \eqref{eq-blowup-y1}. In the invariant line $\{\rho = \bar{\epsilon} = 0\}$ there are two singularities $\bar{x} = \pm \sqrt{\frac{2}{F_\pm''(0)}}$, which we will denote by $p_{\pm}$. Both singularities have two-dimensional center manifolds, and the unstable (resp. stable) manifold of $p_{-}$ (resp. $p_{+}$) is the $\bar{x}$-axis. Such singularities correspond to the intersection of the critical manifold with the blow-up locus. The center behavior near $p_\pm$ is given by 
 \begin{equation}
\left\{
\begin{array}{rcl}
  \dot{\rho} & = &\frac{1}{2}\rho\bar\epsilon^2 \left(\pm G_\pm'(0)\sqrt{\frac{2}{F_\pm''(0)}}+O(\rho,\bar\epsilon)\right),\nonumber \\
  \dot{\bar{\epsilon}} & = & -\frac{1}{2}\bar\epsilon^3\left(\pm G_\pm'(0)\sqrt{\frac{2}{F_\pm''(0)}}+O(\rho,\bar\epsilon)\right).\nonumber
  \end{array}
\right.    
\end{equation}

\subsubsection{Dynamics in the chart $\bar{\epsilon} = 1$}

Using the rescaling $(x,y) = ({\epsilon}\bar{x},{\epsilon}^{2}\bar{y})$, with $(\bar x,\bar y)$ kept in a large compact set, one obtains (after division by $\epsilon$) the PWS system
\begin{equation}\label{eq-pws-blownup}
\left\{
\begin{array}{rcl}
  \dot{\bar{x}} & = & \bar{y} - \frac{F_-''(0)}{2}\bar{x}^{2} + O(\epsilon\bar x^3), \\
  \dot{\bar{y}} & = & \alpha_{-} +G_-'(0)\bar x+O(\epsilon\bar x^2),
  \end{array}
\right. \quad 
\left\{
\begin{array}{rcl}
  \dot{\bar{x}} & = & \bar{y} - \frac{F_+''(0)}{2}\bar{x}^{2} + O(\epsilon\bar x^3), \\
  \dot{\bar{y}} & = & \alpha_{+} +G_+'(0)\bar x+O(\epsilon\bar x^2).
  \end{array}
\right.
\end{equation}

Let us describe the dynamics in the top of the blow-up locus. For ${\epsilon} = 0$ and $\alpha_\pm=0$, the PWS system \eqref{eq-pws-blownup} has the following form:
 \begin{equation}\label{eq-hamiltonian}
\left\{
\begin{array}{rcl}
  \dot{\bar{x}} & = & \bar{y} - \frac{F_-''(0)}{2}\bar{x}^{2}, \\
  \dot{\bar{y}} & = & G_-'(0)\bar x,
  \end{array}
\right. \ \bar x\le 0, \quad 
\left\{
\begin{array}{rcl}
  \dot{\bar{x}} & = & \bar{y} - \frac{F_+''(0)}{2}\bar{x}^{2}, \\
  \dot{\bar{y}} & = &G_+'(0)\bar x,
  \end{array}
\right. \ \bar x\ge 0.
\end{equation}

A first integral of \eqref{eq-hamiltonian} is given by $$H_{\pm}(\bar x,\bar y) = e^{-\frac{2\bar y}{\beta_{\pm}}}(\frac{\bar{y}}{\beta_{\pm}} + G_\pm'(0) \frac{\bar{x}^{2}}{\beta_{\pm}^2} + \frac{1}{2}),$$
where $\beta_\pm$ were defined in \eqref{const-beta}. Note that $p_-$ (resp. $p_+$) defined in Section \ref{sec-phase-direc-y} is the end point of the invariant curve $\{\bar{y}= - G_-'(0) \frac{\bar{x}^{2}}{\beta_{-}} - \frac{\beta_-}{2},\bar x\le 0\}$ (resp. $\{\bar{y}= - G_+'(0) \frac{\bar{x}^{2}}{\beta_{+} }- \frac{\beta_+}{2},\bar x\ge 0 \}$), corresponding to the level $H_{-}(\bar x,\bar y)=0$ (resp. $H_{+}(\bar x,\bar y)=0$) Moreover, the curve intersects the switching locus at $(0,-\frac{\beta_{-}}{2})$ (resp. $(0,-\frac{\beta_{+}}{2})$). The origin $(\bar x,\bar y)=(0,0)$ is a center for both vector fields in \eqref{eq-hamiltonian} and it corresponds to the levels $H_{\pm}(\bar x,\bar y) = \frac{1}{2}$. The origin has a ``focus-like'' behavior for $\beta_{-} \neq \beta_{+}$ and a ``center-like'' behavior for $\beta_{-}=\beta_{+}$. See Figure \ref{fig-pws-lienard}.
\begin{figure}[htb]
	\begin{center}
    \begin{overpic}[width=0.8\textwidth]{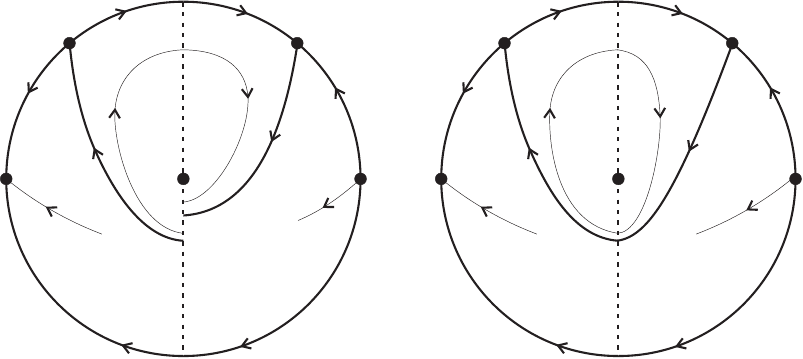}
\put(21,-4){(a)}
\put(75,-4){(b)}
\put(5,41){$p_-$}
\put(38,41){$p_+$}
\put(59,41){$p_-$}
\put(92,41){$p_+$}
\end{overpic}
         \end{center}
 \caption{Phase portraits of the blown-up vector field \eqref{eq-hamiltonian} for $\beta_{-} \neq \beta_{+}$ (Figure (a)) and $\beta_{-} = \beta_{+}$ (Figure (b)). Each of the charts $\{\bar x=\pm 1\}$ contains one extra singularity of hyperbolic type. In the chart $\{\bar y=-1\}$ the dynamics is regular pointing from the right to the left.}\label{fig-pws-lienard}
\end{figure}

It is now clear that, for $\beta_{-}=\beta_{+}$, the study of crossing limit cycles of \eqref{eq-pws-lienard-general} in a small $\epsilon$-uniform neighborhood of the origin $(x,y)=(0,0)$ has three components: (1) the study near the origin $(\bar x,\bar y)=(0,0)$ inside the family \eqref{eq-pws-blownup}, (2) the study near the singular polycycle (it consists of $p_-$ and $p_+$ and the regular orbits that are heteroclinic to them), combining \eqref{eq-blowup-y1} and \eqref{eq-pws-blownup}, and (3) the study near ``ovals'' surrounding the origin inside the family \eqref{eq-pws-blownup}. This is a topic of further study.

Suppose that $\beta_{-}\ne\beta_{+}$. Then the connection between $p_+$ and $p_-$ is broken (see Figure \ref{fig-pws-lienard}(a)). We consider the half return maps $\Pi_{\pm}:]0,\infty[\rightarrow ]-\frac{\beta_{\pm}}{2},0[$. We define $\Pi_{+}$ by considering the flow of \eqref{eq-hamiltonian} in forward time and $\Pi_{-}$ by considering the flow of \eqref{eq-hamiltonian} in backward time.

\begin{proposition}\label{prop-pws-lienard}
Consider system \eqref{eq-hamiltonian}. If $\beta_{-} \neq \beta_{+}$, then the difference map $\Delta(\bar y) = \Pi_{+}(\bar y) - \Pi_{-}(\bar y)$ does not have zeros in the interval $]0,\infty[$.
\end{proposition}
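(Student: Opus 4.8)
The plan is to exploit that the orbits of \eqref{eq-hamiltonian} are level curves of the first integrals $H_\pm$, so that the half-return maps are governed entirely by the conservation law on the switching line $\{\bar x=0\}$. On $\{\bar x=0\}$ one has $H_\pm(0,\bar y)=e^{-2\bar y/\beta_\pm}\bigl(\bar y/\beta_\pm+\tfrac12\bigr)$, which after the rescaling $s=\bar y/\beta_\pm$ becomes $h(s)$ with the \emph{universal} profile $h(s)=e^{-2s}(s+\tfrac12)$, independent of $\pm$. Since $h'(s)=-2se^{-2s}$, the function $h$ has a strict maximum $h(0)=\tfrac12$, increases on $s<0$ and decreases on $s>0$, so each level $c\in\,]0,\tfrac12[$ is attained at exactly one negative and one positive argument. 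I would therefore introduce the branch map $\sigma:\,]0,\infty[\to\,]-\tfrac12,0[$ defined by $h(\sigma(t))=h(t)$ with $\sigma(t)<0<t$, and record that conservation of $H_\pm$ forces $\Pi_\pm(\bar y)=\beta_\pm\,\sigma(\bar y/\beta_\pm)$ (the sign and the choice of the negative branch come from the clockwise/counterclockwise flow of each half-system).

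With this normalization the difference map reads $\Delta(\bar y)=P(\beta_+)-P(\beta_-)$, where $P(\beta):=\beta\,\sigma(\bar y/\beta)$ for fixed $\bar y>0$. Hence it suffices to prove that $\beta\mapsto P(\beta)$ is \emph{strictly monotone} on $]0,\infty[$: then $\beta_-\neq\beta_+$ gives $P(\beta_-)\neq P(\beta_+)$ and $\Delta(\bar y)\neq 0$. Writing $t=\bar y/\beta$ I would compute $dP/d\beta=\sigma(t)-t\,\sigma'(t)$, and obtain $\sigma'$ by implicit differentiation of $h(\sigma)=h(t)$, namely $\sigma'(t)=h'(t)/h'(\sigma(t))$. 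Substituting $a:=t>0$ and $b:=-\sigma(t)\in\,]0,\tfrac12[$, the quantity $dP/d\beta$ is negative precisely when $a^2e^{-2(a+b)}<b^2$; proving this inequality for all admissible pairs yields that $P$ is strictly decreasing.

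The heart of the argument is then to establish this inequality under the constraint $h(a)=h(-b)$. Using the constraint to eliminate $e^{-2a}$ turns $a^2e^{-2(a+b)}<b^2$ into the polynomial inequality $a-b<2ab$. I would linearize the transcendental profile by the substitution $w=2s+1$, under which $h(s)=\tfrac{e}{2}\,w e^{-w}$ and the two intersection arguments become $w_+:=2a+1>1$ and $w_-:=1-2b\in\,]0,1[$, with the constraint becoming $w_+e^{-w_+}=w_-e^{-w_-}$. A short algebraic rearrangement shows that $a-b<2ab$ is \emph{equivalent} to $w_+w_-<1$.

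Finally, the constraint $w_+e^{-w_+}=w_-e^{-w_-}$ is the same as $\ln w_+-\ln w_-=w_+-w_-$, i.e. the logarithmic mean of $w_+$ and $w_-$ equals $1$; the desired $w_+w_-<1$ is then immediate from the classical strict inequality $\sqrt{w_+w_-}<\frac{w_+-w_-}{\ln w_+-\ln w_-}$ (geometric mean $<$ logarithmic mean) valid for $w_+\neq w_-$. I expect the main obstacle to be bookkeeping rather than depth: correctly tracking the flow direction and branch choices that give $\Pi_\pm=\beta_\pm\,\sigma(\cdot/\beta_\pm)$, keeping the sign of $dP/d\beta$ straight through the substitutions, and spotting the change of variable $w=2s+1$ that exposes the logarithmic-mean structure. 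Once the problem is reduced to $w_+w_-<1$, the geometric--logarithmic mean inequality closes it cleanly.
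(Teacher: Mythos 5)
Your proof is correct, and it takes a genuinely different route from the paper's. The paper also starts from the conservation identity $H_{\pm}(\bar y)=H_{\pm}(\Pi_{\pm}(\bar y))$, but then differentiates it to realize the graphs $u=\Pi_{\pm}(\bar y)$ as the stable separatrices of the hyperbolic saddle at the origin of an auxiliary system $\dot u=\bar y e^{-2\bar y/\beta_{\pm}}$, $\dot{\bar y}=u e^{-2u/\beta_{\pm}}$, and concludes by a transversality argument: the dot product of the $\beta_{-}$-field with the field orthogonal to the $\beta_{+}$-field equals $u\bar y\bigl(e^{-2\bar y/\beta_{-}-2u/\beta_{+}}-e^{-2\bar y/\beta_{+}-2u/\beta_{-}}\bigr)$, which for $\beta_{-}\neq\beta_{+}$ vanishes only on $\{u\bar y(u-\bar y)=0\}$, so the two separatrices cannot cross in the open second quadrant. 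Your argument instead exploits the scaling covariance explicitly: writing $\Pi_{\pm}(\bar y)=\beta_{\pm}\,\sigma(\bar y/\beta_{\pm})$ with a single universal branch-swap function $\sigma$ of the profile $h(s)=e^{-2s}(s+\tfrac12)$, you reduce the claim to strict monotonicity of $\beta\mapsto\beta\,\sigma(\bar y/\beta)$, and prove it by eliminating the exponential through the level constraint and invoking the geometric-mean/logarithmic-mean inequality after the substitution $w=2s+1$ (I checked the chain of equivalences $a^2e^{-2(a+b)}<b^2 \Leftrightarrow a-b<2ab \Leftrightarrow w_+w_-<1$, and the constraint indeed says the logarithmic mean of $w_{\pm}$ equals $1$; all steps are sound). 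The paper's route is qualitative and computation-free, of the kind that can survive perturbations where no explicit first integral is available; your route is quantitative and buys strictly more: monotonicity of $P(\beta)=\beta\,\sigma(\bar y/\beta)$ pins down the sign of $\Delta(\bar y)$ according to the sign of $\beta_{+}-\beta_{-}$, i.e., it tells you in which direction the connection between $p_{+}$ and $p_{-}$ breaks, which is precisely the regular-breaking information alluded to in Remark \ref{remark-nonzeroMD}, rather than mere nonvanishing.
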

\begin{proof}
For any $\bar y\in ]0,\infty[$, the points $(0,\bar y)$ and $(0,\Pi_{\pm}(\bar y))$ belong to the same level curve $H_{\pm}(\bar x,\bar y) = h_{\pm}$, with $0 < h_{\pm} < \frac{1}{2}$. If we write $H_\pm(\bar y):=H_{\pm}(0,\bar y)$, then we have $H_{\pm}(\bar y) = H_{\pm}( \Pi_{\pm}(\bar y) )$. This implies
\begin{equation}\label{eq-derivative-half-return}
H_{\pm}'(\bar y) = H_{\pm}'( \Pi_{\pm}(\bar y) )\Pi_{\pm}'(\bar y).
\end{equation}

From \eqref{eq-derivative-half-return} it follows that $u=\Pi_{\pm}(\bar y)$ is the $\bar y>0$-subset of the stable manifold of the hyperbolic
saddle $(u, \bar y) = (0, 0)$ of
\begin{equation}\label{eq-auxiliary-system}
\left\{
\begin{array}{rcl}
    \dot{u} & = & \bar ye^{-\frac{2\bar y}{\beta_{\pm}}}, \\
    \dot{\bar y} & = & ue^{-\frac{2u}{\beta_{\pm}}}.
\end{array}
\right.
\end{equation}

We remark that $u=\Pi_{\pm}(\bar y)$ are contained in the second quadrant $\{u<0,\bar y>0\}$. The result follows by showing that $u=\Pi_{-}(\bar y)$ and $u=\Pi_{+}(\bar y)$ do not have intersection points in the second quadrant. Suppose by contradiction that they intersect in the second quadrant. This would imply the existence of contact points between the orbits of system \eqref{eq-auxiliary-system} with $\beta_-$ and the separatrix $u=\Pi_{+}(\bar y)$. However, observe that
$$(\bar ye^{-\frac{2\bar y}{\beta_{-}}}, ue^{-\frac{2u}{\beta_{-}}}) \cdot (ue^{-\frac{2u}{\beta_{+}}},-\bar ye^{-\frac{2\bar y}{\beta_{+}}}) = u\bar y\Big{(}e^{-\frac{2\bar y}{\beta_{-}}-\frac{2u}{\beta_{+}}} - e^{-\frac{2\bar y}{\beta_{+}}-\frac{2u}{\beta_{-}}}\Big{)}.$$

For $\beta_{-} \neq \beta_{+}$, the last equation only vanishes in the set $\{u\bar y(u-\bar y) = 0\}$, and therefore there are no contact points in the second quadrant.
\end{proof}

A direct consequence of Proposition \ref{prop-pws-lienard} is the following.

\begin{proposition}\label{prop-no-LC}
Consider the PWS Li\'{e}nard equation \eqref{eq-pws-lienard-general} with $\beta_{-}\neq\beta_{+}$. Let $[\bar y_1,\bar y_2]\subset ]0,\infty[$. Then there exist  $\epsilon_{0} > 0$ and a small neighborhood $\mathcal U_\pm$ of $\alpha_\pm=0$ such that for each $(\epsilon,\alpha_-,\alpha_+)\in [0,\epsilon_0]\times \mathcal U_-\times \mathcal U_+$ the difference map $\Delta_{\epsilon,\alpha_-,\alpha_+}(\bar y)$ associated to \eqref{eq-pws-blownup} does not have zeros in $[\bar y_1,\bar y_2]$.
\end{proposition}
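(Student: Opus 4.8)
The plan is to deduce Proposition \ref{prop-no-LC} from Proposition \ref{prop-pws-lienard} by a continuity-and-compactness argument, exploiting the smooth dependence of the half return maps on the parameters $(\epsilon,\alpha_-,\alpha_+)$. Write $\Delta_0 := \Delta$ for the difference map of the unperturbed Hamiltonian system \eqref{eq-hamiltonian} (that is, $\epsilon=0$, $\alpha_\pm=0$). Proposition \ref{prop-pws-lienard} guarantees $\Delta_0(\bar y)\neq 0$ for every $\bar y\in ]0,\infty[$. Since $[\bar y_1,\bar y_2]$ is compact and $\Delta_0$ is continuous, there is a constant $c>0$ with $|\Delta_0(\bar y)|\geq c$ for all $\bar y\in [\bar y_1,\bar y_2]$. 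The remaining task is to show that the perturbed difference map $\Delta_{\epsilon,\alpha_-,\alpha_+}$ is defined and lies within $c$ of $\Delta_0$ on this interval once the parameters are small enough.

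First I would make the half return maps precise for \eqref{eq-pws-blownup}. Fix $\bar y\in [\bar y_1,\bar y_2]$. Since $\dot{\bar x}=\bar y>0$ at $(\bar x,\bar y)=(0,\bar y)$ when $\epsilon=\alpha_\pm=0$, and this sign persists for small parameters because $\bar y$ is bounded away from $0$, both half orbits cross $\Sigma=\{\bar x=0\}$ transversally; hence the Implicit Function Theorem produces return maps $\Pi_\pm^{\epsilon,\alpha_-,\alpha_+}$ that are $C^\infty$-smooth jointly in $(\bar y,\epsilon,\alpha_-,\alpha_+)$. Crucially, for $\bar y$ in the compact interval the unperturbed orbit is a compact level curve $\{H_\pm=h\}$ with $h=H_\pm(0,\bar y)$ staying in a compact subset of $]0,\tfrac{1}{2}[$, so it is bounded away from both the center (level $\tfrac{1}{2}$) and the separatrix through $p_\pm$ (level $0$). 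Consequently the return time is uniformly bounded on $[\bar y_1,\bar y_2]$, and by regular dependence of solutions of ODEs on parameters the maps $\Pi_\pm^{\epsilon,\alpha_-,\alpha_+}$ converge to $\Pi_\pm$ uniformly on $[\bar y_1,\bar y_2]$ as $(\epsilon,\alpha_-,\alpha_+)\to(0,0,0)$.

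With uniform convergence in hand, $\Delta_{\epsilon,\alpha_-,\alpha_+}=\Pi_+^{\epsilon,\alpha_-,\alpha_+}-\Pi_-^{\epsilon,\alpha_-,\alpha_+}\to\Delta_0$ uniformly on $[\bar y_1,\bar y_2]$. Thus I can choose $\epsilon_0>0$ and neighborhoods $\mathcal U_\pm$ of $\alpha_\pm=0$ so small that $\sup_{\bar y\in [\bar y_1,\bar y_2]}|\Delta_{\epsilon,\alpha_-,\alpha_+}(\bar y)-\Delta_0(\bar y)|<c$ for all $(\epsilon,\alpha_-,\alpha_+)\in [0,\epsilon_0]\times\mathcal U_-\times\mathcal U_+$. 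Combined with $|\Delta_0|\geq c$, the reverse triangle inequality gives $|\Delta_{\epsilon,\alpha_-,\alpha_+}(\bar y)|>0$ throughout $[\bar y_1,\bar y_2]$, so the perturbed difference map has no zeros there, as claimed.

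The main obstacle I anticipate is the justification that the perturbed half return maps are well-defined and depend smoothly on the parameters all the way up to the switching manifold $\Sigma$, uniformly on $[\bar y_1,\bar y_2]$. This requires two ingredients: transversality of the flow to $\Sigma$ at the entry and exit points, which follows from the crossing condition $\Sigma^s=\emptyset$ and persists under small perturbations; and a uniform upper bound on the return time, which is precisely where the compactness of $[\bar y_1,\bar y_2]$ enters, keeping the orbits away from the center and the heteroclinic separatrix. Without this restriction the return time would blow up as $\bar y\to 0^+$ or $\bar y\to\infty$, and the uniform convergence underpinning the argument could fail.
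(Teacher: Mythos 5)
Your proposal is correct and follows essentially the same route as the paper: the paper derives Proposition \ref{prop-no-LC} precisely as ``a direct consequence'' of Proposition \ref{prop-pws-lienard}, i.e., by the compactness-plus-regular-perturbation argument you spell out. Your write-up simply makes explicit the details the paper leaves implicit (the uniform lower bound $|\Delta_0|\geq c$ on the compact interval, transversal crossing of $\Sigma$, uniformly bounded return times away from the center and the separatrices through $p_\pm$, and uniform convergence of the perturbed half return maps), all of which are sound.
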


Observe that we do not study limit cycles bifurcating close to the origin of the phase space of \eqref{eq-pws-blownup}.

\subsection{Limit cycles near bounded canard cycles}\label{section-bounded-LC}
Consider the PWS Li\'{e}nard equation
\begin{align}\label{PWSLienard-example-balanced}
  \begin{cases}
  \dot x=y-\frac{x^{2}}{2} ,\\ 
  \dot y=\epsilon \left(-(1+\delta) x - \frac{x^{2}}{2} + x^{4}\right), \end{cases}
  \   x\le 0,\quad
  \begin{cases}
  \dot x=y-\frac{x^{2}}{2} ,\\ 
  \dot y=\epsilon \left(-x - \frac{x^{2}}{2} + x^{4}\right), \end{cases}  \  x\ge 0,
  \end{align}
with $\delta\in\mathbb R$ kept close to $0$. This section is devoted to prove the following proposition.
\begin{proposition}\label{prop-bounded-CL}
There is a continuous function $\hat y:]-\delta_0,\delta_0[\to \mathbb R$ with $\delta_0 > 0$ small and  satisfying $\hat y(0)>0$ such that, for each $\delta\in ]-\delta_0,\delta_0[$, $y=\hat y(\delta)$ is a simple zero of the slow divergence integral $I_\delta$ of \eqref{PWSLienard-example-balanced}, defined in \eqref{SDI-example-1}.
\end{proposition}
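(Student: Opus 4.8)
The plan is to reduce the statement to a one-parameter problem at $\delta=0$ and then propagate by the Implicit Function Theorem. First I would record that for \eqref{PWSLienard-example-balanced} we have $F_\pm(x)=x^2/2$, so $F_\pm'(x)=x$, and the defining relations $F_-(\alpha(y))=y$, $F_+(\omega(y))=y$ give the $\delta$-independent expressions $\alpha(y)=-\sqrt{2y}$ and $\omega(y)=\sqrt{2y}$. Writing the total slow divergence integral $I_\delta=I_+-I_-$ as in \eqref{PWSSDI}--\eqref{SDI-total}, the integrands are $C^\infty$ and depend smoothly on $\delta$, so $(\delta,y)\mapsto I_\delta(y)$ is smooth wherever \eqref{assum2} holds. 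It therefore suffices to (i) locate a zero $\hat y_0>0$ of $I_0$, (ii) show it is simple, i.e.\ $\partial_yI_0(\hat y_0)\neq0$, and (iii) propagate it by the Implicit Function Theorem applied to $I_\delta(y)=0$: the nondegeneracy $\partial_yI_0(\hat y_0)\neq0$ yields a smooth branch $\delta\mapsto\hat y(\delta)$ with $\hat y(0)=\hat y_0$, and by continuity of $\partial_yI_\delta$ the zero stays simple and stays in the admissible region for $|\delta|$ small.

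For steps (i)--(ii) the key computation is the derivative formula obtained by differentiating \eqref{PWSSDI} under the integral sign and using $\omega'(y)=1/\omega(y)$, $\alpha'(y)=1/\alpha(y)$:
\begin{equation*}
I_\delta'(y)=\frac{\omega(y)}{G_+(\omega(y))}-\frac{\alpha(y)}{G_-(\alpha(y))}.
\end{equation*}
At $\delta=0$ we have $G_\pm(x)=x\bigl(-1-\tfrac{x}{2}+x^3\bigr)$, so with $\omega=\sqrt{2y}$ and $\alpha=-\omega$ the formula reduces to a difference of two explicit rational functions of $\omega$; combining them over the common denominator $ab$, with $a=-1-\tfrac{\omega}{2}+\omega^3$ and $b=-1+\tfrac{\omega}{2}-\omega^3$, produces a numerator proportional to $\omega(1-2\omega^2)$. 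Since $a<0$ and $b<0$ on the admissible interval $0<\omega<x^*$ (so $ab>0$ there), while the numerator vanishes only at $\omega=1/\sqrt2$, one reads off that $I_0$ is strictly increasing for $0<y<1/4$ and strictly decreasing for $1/4<y<y^*$, where $y^*=(x^*)^2/2$ and $x^*>0$ is the positive root of $-1-\tfrac{x}{2}+x^3$, i.e.\ the second zero of $G_+$ beyond which \eqref{assum2} fails. Because $I_0(y)\to0$ as $y\to0^+$, this gives $I_0>0$ on a right neighbourhood of $0$, and in particular $I_0(1/4)>0$.

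It remains to see that $I_0$ crosses zero on $(1/4,y^*)$, which I would obtain from the behaviour at $x^*$: as $\omega\to x^{*-}$ we have $G_+(\omega)\to0$ with $G_+'(x^*)>0$, so the integrand $x^2/G_+(x)$ develops a nonintegrable logarithmic singularity and $I_+(y)\to-\infty$, whereas $I_-(y)$ remains finite (the negative branch satisfies \eqref{assum2} for all $x<0$). Hence $I_0(y)\to-\infty$ as $y\to y^{*-}$. Together with $I_0(1/4)>0$ and strict monotonicity on $(1/4,y^*)$, the Intermediate Value Theorem yields a \emph{unique} zero $\hat y_0\in(1/4,y^*)$ with $I_0'(\hat y_0)<0$, which is thus simple and positive, settling (i)--(ii). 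I expect the main obstacle to be step (ii): establishing simplicity requires controlling the sign of $I_0'$ across the whole admissible interval, which rests on verifying that the denominator $G_+(\omega)G_-(\alpha)/(\omega\alpha)$ does not vanish on $(0,x^*)$ and that the numerator $\omega(1-2\omega^2)$ changes sign exactly once. Once this sign analysis is in hand, the divergence at $x^*$ and the smooth $\delta$-dependence needed for the final Implicit Function Theorem step are routine.
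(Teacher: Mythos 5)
Your proof is correct, and it shares the paper's overall skeleton---find a simple zero $\hat y_0$ of $I_0$ and then propagate it in $\delta$ by the Implicit Function Theorem (this last step is identical in both arguments)---but you handle the key step by a genuinely different, more self-contained route. The paper does not prove existence and simplicity of $\hat y_0$ directly: it invokes \cite[Section 5.3]{BoxDomagoj} (applicable because \eqref{PWSLienard-example-balanced} reduces to a smooth slow-fast system at $\delta=0$) and supports this with the numerical value $\hat y_0\approx 0.608853$. You instead compute $I_0'(y)=\frac{\omega(1-2\omega^2)}{ab}$ with $\omega=\sqrt{2y}$, $a=-1-\tfrac{\omega}{2}+\omega^3$, $b=-1+\tfrac{\omega}{2}-\omega^3$, verify $ab>0$ on the admissible interval, and combine the resulting unimodality of $I_0$ with $I_0(y)\to 0$ as $y\to 0^+$ and the divergence $I_+(y)\to-\infty$ as $y\to y^{*-}$ to obtain, via the Intermediate Value Theorem, a unique zero $\hat y_0\in\,]1/4,y^*[$ at which $I_0'(\hat y_0)<0$. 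This buys strictly more than the paper's citation: uniqueness of the zero on all of $]0,y^*[$, the localization $\hat y_0>1/4$, and the sign of $I_0'$ at the zero, with no numerics (and fully consistent with the paper's values $\hat y_0\approx 0.609$, $y^*\approx 0.679$). Two cosmetic points: your derivative formula comes from the fundamental theorem of calculus applied to the variable endpoints $\omega(y)=\sqrt{2y}$, $\alpha(y)=-\sqrt{2y}$, not from differentiation under the integral sign; and the remaining verification you flag---that $b<0$ on $]0,x_0[$---is a one-line check, since $1-\tfrac{\omega}{2}+\omega^3$ attains its minimum $1-\tfrac{1}{3\sqrt{6}}>0$ at $\omega=1/\sqrt{6}$ (equivalently, it restates the paper's observation that the slow dynamics is strictly negative on $]-x_0,x_0[\,$); likewise, the singularity of the integrand at $x_0$ is a simple pole, since $x_0$ is a simple zero of the cubic, which justifies the logarithmic divergence you use.
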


Concerning System \eqref{PWSLienard-example-balanced}, Proposition \ref{prop-bounded-CL} and Theorem \ref{thm3} (see also Remark \ref{remark-balanceddd}) imply that, for each $\delta\in ]-\delta_0,\delta_0[$, the Minkowski dimension of any entry-exit orbit tending monotonically to $\hat y(\delta)$ is equal to $0$.

It can be checked that system \eqref{PWSLienard-example-balanced} satisfies \eqref{assum1} for $\delta$ sufficiently small, and the curve of singularities is given by $S=\{y=\frac{x^{2}}{2}\}$. The associated slow dynamics is given by (see also \eqref{PWSslowdyn})
$$x'=-1-\delta-\frac{x}{2}+x^3, \text{ for } x\le 0, \ x'=-1-\frac{x}{2}+x^3, \text{ for } x\ge 0. $$ 

The slow dynamics has a simple zero $x_0>0$ and it is strictly negative for all $x\in]-x_0,x_0[$ and $\delta$ small enough. One can compute $x_{0}$ numerically, and we obtain $x_{0} \approx 1,16537$.

Using \eqref{SDI-total} we get
\begin{equation}
    \label{SDI-example-1}
    I_\delta(y)=\int_0^{\sqrt{2y}}\frac{xdx}{-1-\frac{x}{2}+x^3}+\int_{-\sqrt{2y}}^0\frac{xdx}{-1-\delta-\frac{x}{2}+x^3}, \   y\in]0,\frac{x_0^2}{2}[. 
\end{equation} 

Following \cite[Section 5.3]{BoxDomagoj}, we know that $I_0$ has a simple zero $\hat y_0\in]0,\frac{x_0^2}{2}[$, and it follows from the Implicit Function Theorem that this zero persists for $\delta > 0$ sufficiently small. One can compute zeroes of $I_{0}$ numerically. Indeed, by approximating the integrand of \eqref{SDI-example-1} using Taylor series and then evaluating the integral, one obtains $\hat y_0 \approx 0,608853$. Observe that $\frac{x_0^2}{2} \approx 0,679047$.

Now, we define
\begin{equation}\label{eq-pws-bounded}
\begin{cases}
  \dot x=y-\frac{x^{2}}{2} ,\\ 
  \dot y=\epsilon^2 \left(\epsilon\alpha_--(1+\delta) x - \frac{x^{2}}{2} + x^{4}\right), \end{cases}
  \  
  \begin{cases}
  \dot x=y-\frac{x^{2}}{2} ,\\ 
  \dot y=\epsilon^2 \left(\epsilon\alpha_+ -x - \frac{x^{2}}{2} + x^{4}\right), \end{cases}
\end{equation}
with $\alpha_{\pm}$ kept near $0$. System \eqref{eq-pws-bounded} with $-$ (resp. $+$) corresponds to the vector field defined in $x \leq 0$ (resp. $x \geq 0$). We have 
$$\beta_-=2(1+\delta) \text{ and } \beta_+=2,$$
with $\beta_\pm$ defined in \eqref{const-beta}.

Let $\hat\delta\in ]-\delta_0,\delta_0[$, $\hat\delta\ne 0$. Then \eqref{eq-pws-bounded} has no crossing limit cycles Hausdorff close to the balanced canard cycle $\Gamma_{\hat y(\hat\delta)}$, for $\epsilon>0$, $\epsilon\sim 0$, $\alpha_\pm\sim 0$ and $\delta\sim\hat\delta$. Indeed, notice that the connection on the blow-up locus between the attracting branch $S_+=\{y=\frac{x^{2}}{2},x>0\}$ and the repelling branch $S_-=\{y=\frac{x^{2}}{2},x<0\}$ of $S$ is broken, because $\beta_-\ne\beta_+$ for $\delta=\hat\delta$ (see Figure \ref{fig-pws-lienard}(a)).

Suppose that $\delta = 0$ and $\alpha_{\pm} = \alpha$. Then \eqref{eq-pws-bounded} becomes a smooth slow-fast system and therefore we are in the framework of \cite[Section 5.3]{BoxDomagoj}. Thus, for each $\epsilon>0$ and $\epsilon\sim 0$, \eqref{eq-pws-bounded} undergoes a saddle-node bifurcation of crossing limit cycles, Hausdorff close to $\Gamma_{\hat y(0)}$, when we vary $\alpha\sim 0$. 

\begin{remark}\label{remark-nonzeroMD}
If $y=\hat y$ is a zero of $I$ of multiplicity $m_{\hat y}(I)$, then \eqref{eq-pws-lienard-general} can have at most $m_{\hat y}(I)+1$ limit cycles Hausdorff close to the canard cycle $\Gamma_{\hat y}$, for $\epsilon>0$, $\epsilon\sim 0$ and $\alpha_\pm\sim 0$. Moreover, if $\beta_\pm(\delta)$ are functions of $\delta$, $\beta_-(0)=\beta_+(0)$ and $\beta_-'(0)\ne \beta_+'(0)$ (connection between $p_+$ and $p_-$ is broken in a regular way), and $I$ has a simple zero at $y=\hat y$, then for each $\epsilon>0$, $\epsilon\sim 0$ and $\alpha_\pm\sim 0$, \eqref{eq-pws-lienard-general} undergoes a saddle-node bifurcation of (crossing) limit cycles, near $\Gamma_{\hat y}$, when we vary $\delta\sim 0$. (We can apply this to \eqref{eq-pws-bounded}.) These and other cyclicity results will be proved in a separate paper.
\end{remark}

\subsection{Limit cycles near the unbounded canard cycle}\label{sec-LC-unbound}
Consider the classical PWS Li\'{e}nard equation
\begin{align}\label{PWSLienard-example-unbounded}
  \begin{cases}
  \dot x=y-(x^4+2x^2),\\ 
  \dot y=-\epsilon 2x, \end{cases}
  \   x\le 0,\quad
  \begin{cases}
  \dot x=y-(x^4+\delta x^2),\\ 
  \dot y=-\epsilon x, \end{cases}  \  x\ge 0,
  \end{align}
with $\delta\in\mathbb R$ kept close to $1$. System \eqref{PWSLienard-example-unbounded} is a special case of \eqref{model-Lienard1} and it satisfies \eqref{assum1} and \eqref{assum2} with $L_-=]-\infty,0[$ and $L_+=]0,\infty[$. Statement 1 of Theorem \ref{thm4} implies that, for each $\delta$ close to $1$, the Minkowski dimension of any entry-exit orbit tending (monotonically)
to $\infty$ is equal to $0$. 

We focus now on 
\begin{align}\label{PWSLienard-example-unbounded-perturbed}
  \begin{cases}
  \dot x=y-(x^4+2x^2),\\ 
  \dot y=\epsilon^2(\epsilon\alpha_- -2x), \end{cases}
  \   x\le 0,\quad
  \begin{cases}
  \dot x=y-(x^4+\delta x^2),\\ 
  \dot y=\epsilon^2(\epsilon\alpha_+- x), \end{cases}  \  x\ge 0,
  \end{align}
  where $\alpha_\pm$ are close to zero. We have (see \eqref{const-beta})
  $$\beta_-=1 \text{ and } \beta_+=\frac{1}{\delta}.$$

Take $\hat \delta\ne 1$. Then $\beta_-\ne \beta_+$ and \eqref{PWSLienard-example-unbounded-perturbed} has no limit cycles Hausdorff close to the unbounded canard cycle defined in Section \ref{m<-section-infty}, for $\epsilon>0$, $\epsilon\sim 0$, $\alpha_\pm\sim 0$ and $\delta\sim\hat\delta$ (see Figure \ref{fig-pws-lienard}(a)). For $\delta=1$, we have the orbit on the blow-up locus connecting $p_+$ and $p_-$ (see Figure \ref{fig-pws-lienard}(b)), and the unbounded canard cycle may produce limit cycles of \eqref{PWSLienard-example-unbounded-perturbed} for $\epsilon>0$, $\epsilon\sim 0$, $\alpha_\pm\sim 0$ and $\delta\sim 1$.

\section*{Declarations}
 
\textbf{Ethical Approval} \ 
Not applicable.
 \\
\\
 \textbf{Competing interests} \  
The authors declare that they have no conflict of interest.\\
 \\
\textbf{Authors' contributions} \  All authors conceived of the presented idea, developed the theory, performed the computations and
contributed to the final manuscript.  \\ 
\\ 
\textbf{Funding} \
The research of R. Huzak and G. Radunovi\'{c}  was supported by: Croatian Science Foundation (HRZZ) grant IP-2022-10-9820. Additionally, the research of G. Radunovi\'{c} was partially
supported by the Horizon grant 101183111-DSYREKI-HORIZON-MSCA-2023-SE-01. Otavio Henrique Perez is supported by Sao Paulo Research Foundation (FAPESP) grants 2021/10198-9 and 2024/00392-0.\\
 \\
\textbf{Availability of data and materials}  \
Not applicable.

\bibliographystyle{plain}
\bibliography{bibtex}
\end{document}